\newtheorem{Theorem}{Theorem}[section]
\newtheorem{Lemma}[Theorem]{Lemma}
\newtheorem{Proposition}[Theorem]{Proposition}
\newtheorem{Definition}[Theorem]{Definition}
\newtheorem{Remark}[Theorem]{Remark}
\newtheorem{Corollary}[Theorem]{Corollary}
\newtheorem{Example}[Theorem]{Example}
\DeclareMathOperator{\Ker}{Ker}
\DeclareMathOperator{\ver}{ver}
\DeclareMathOperator{\hor}{hor}
\DeclareMathOperator{\unm}{unm}
\DeclareMathOperator{\R}{R}
\DeclareMathOperator{\Fitt}{Fitt}
\DeclareMathOperator{\SD}{SD}
\DeclareMathOperator{\Tot}{Tot}
\DeclareMathOperator{\Kitt}{Kitt}
\DeclareMathOperator{\V}{V}
\DeclareMathOperator{\RI}{RI}
\DeclareMathOperator{\coker}{Coker}
\DeclareMathOperator{\htt}{ht}
\DeclareMathOperator{\Ht}{ht}
\DeclareMathOperator{\depth}{depth}
\DeclareMathOperator{\grade}{grade}
\DeclareMathAlphabet{\mathpzc}{OT1}{pzc}{m}{it}
\DeclareMathOperator{\Spec}{Spec}
\DeclareMathOperator{\SDC}{SDC}
\newcommand{\ff}{\mathbf{f}}
\newcommand{\fa}{\mathfrak{a}}
\newcommand{\x}{{\textbf{x}}}
\newcommand{\fm}{\mathfrak{m}}
\newcommand{\fc}{\frak{c}}
\newcommand{\fn}{\frak{n}}
\newcommand{\fp}{\frak{p}}
\newcommand{\fq}{\frak{q}}
\newcommand{\fg}{\frak{g}}
\newcommand{\fb}{\frak{b}}
\begin{document}
\title{Deformation of Residual Intersections}
\author{Hamid Hassanzadeh and  Kevin Vasconcellos }
\email{hamid@im.ufrj.br}
\email{kevin.vasconcellos@uerj.br}
\address{Departamento de Matem\'atica, Centro de Tecnologia, Cidade Universit\'aria da Universidade Federal do Rio de Janeiro, 21941-909 Rio de Janeiro, RJ, Brazil}
\address{Departamento de Matem\'atica, Universidade do Estado do Rio de Janeiro, Rua S\~ao Francisco Xavier, 524 - 6 andar - Maracan\~a, Rio de Janeiro, RJ, Brazil}
\thanks{The first was partially supported by grants CAPES-PrInt Project 88881.311616/2018-00, Finance Code 001, FAPERJ 2025, APQ1  and CNPq-Universal No. 406377/2021-9.}
\date{\today}

\maketitle
\begin{abstract} It is shown that in a Cohen-Macaulay local ring, the generic linkage of an ideal $I$ is a deformation of the arbitrary linkage of $I$. This fact does not need $I$ to be a Cohen-Macaulay ideal.  The same holds for $s$-residual intersections of $I$ when $s\leq \Ht(I)+1$. Under some slight conditions on $I$, one further generalizes this principle to encompass any $s$-residual intersection.
\end{abstract}

\section{Introduction}
One of the main facts connecting linkage theory and deformation theory is Buchweitz's \cite{Buch} result:\vspace{0.1cm}
\begin{quote} Let $P=k\llbracket x_{1},\ldots,x_{n}\rrbracket$ the power series ring and $R=P/I$ a reduced ring. If $I$ is in the linkage class of a complete intersection then $T^{2}(R/k,R)=0$, i.e. $R$ is unobstructed.
\end{quote}\vspace{0.1cm}
Here $T^{i}(R/k,R)$ are upper cotangent functors: $T^{0}(R/k, R)$ is the module of $k$-derivations Der$_{k}(R,R)$, $T^{1}(R/k,R)$ corresponds to the space of isomorphism classes of first-order infinitesimal deformations of $R$ over $k$, and $T^{2}(R/k,R)$ contains the obstructions for lifting infinitesimal deformations of $R$. Therefore $R$ is called rigid over $k $ if $T^{1}(R/k,R)=0$ and nonobstructed if $T^{2}(R/k,R)=0$.

Rigidity is not preserved through an arbitrary linkage class. However Kustin and Miller \cite[Theorem 4.2]{KusMil} show that this property is maintained for ``semi-generic'' linkage. Then it is highly important to see when the generic linkage is a deformation of an arbitrary linkage. 

In their seminal paper \cite{huneke1987structure} Huneke and Ulrich determine the structure of linkage (especially licci ideals) through the study of generic linkage. One of the key tools in this study is the following result, \cite[Proposition 2.14]{huneke1987structure}:\vspace{0.1cm}
\begin{quote}
Let $(R,\fm)$ be a local Gorenstein ring, and $I$ and $J$ two {\bf Cohen-Macaulay} ideals of $R$ which are linked and of positive grade. Considering any first generic linkage $L_{1}(I)$ there exists a prime ideal $\fq \in \Spec(R[\x])$ that contains $\fm$ such that $(R[\x]_{\fq},L_{1}(I)_{\fq})$ is a deformation of $(R,J)$.
\end{quote}
The attention of our work is the Cohen-Macaulay condition in this result.  
While it may seem initially that this condition is not overly restrictive, the evolution of the theory,  in the aforementioned work and subsequent publications like \cite{huneke1988residual} and \cite{huneke1990generic}, led the authors to refine their theory by focusing on Strongly Cohen-Macaulay ideals instead of Cohen-Macaulay ideals.


Surprisingly, we found out that for generic linkage to be a deformation of an arbitrary linked ideal, no conditions must be imposed on $I$. Furthermore, this fact is true for any local Cohen-Macaulay ring $(R,\fm)$ rather than Gorenstein ring in the above setting. In the last result of this work,  \autoref{Lastmain} we show that the same result extends to the class of $(\htt(I)+1)$-residual intersections too.

The achievement is due to the study of residual intersection through {\it Kitt} ideals defined in \cite{boucca2019residual}. It is shown in \autoref{Prop3.1} that the generic Kitt ideals always specialize to the arbitrary Kitt ideal. In \autoref{prop2} we determine when this specialization is via a regular sequence. The question about the deformation of residual intersection then reduces to the cases where Kitt coincides with the residual intersection. To this end, we carefully study the codimension of Kitt ideal in \autoref{heightKitt} which enables us to employ previous results in \cite{hassanzadeh2012cohen} and \cite{boucca2019residual}.  Thence establishes the above surprising fact \autoref{Lastmain}. 

For undefined notations mentioned above, we refer to the Preliminaries section. 
 
 \section{Preliminaries}
In this section, we recall some of our work's basic definitions and notations.  Unexplained notations are taken from the standard books \cite{matsumura1989commutative} and \cite{bruns1998cohen}.

We start by recalling the definitions of $s$-residual intersections.
Let $R$ be a commutative   Noetherian ring and   $I$  an ideal of $R$ with $\grade(I) = g$.  Considering $s\geq g$ an integer, then
\begin{itemize}
\item{An {\it (algebraic) $s$-residual intersection} of $I$ is a proper ideal $J$ of $R$ such that $\grade(J)\geq s$ and $J=(\fa:_RI)$ for some ideal $\fa\subset I$ which is generated by $s$ elements.}
\item{A {\it geometric $s$-residual intersection} of $I$ is an algebraic $s$-residual intersection $J$ of $I$ such that $\grade(I+J)\geq s+1.$}
\item{ An {\it arithmetic $s$-residual intersection} of $I$  is an algebraic $s$-residual intersection such that $\mu_{R_{\fp}}((I/\fa)_{\fp})\leq 1$ for all prime ideal $\fp \supseteq (I+J)$ with $\grade(\fp)\leq s$. ($\mu$ denotes the minimum number of generators.)}
\end{itemize}
The standard definition, cited from \cite{huneke1988residual}, involves substituting the grade of an ideal with its height when $R$ is a Cohen-Macaulay local ring. 

 One of the main tools in our study of residual intersections is {\it Residual Approximation Complexes}. The theory started and developed in \cite{hassanzadeh2012cohen}, \cite{HamidNaeliton}, \cite{ChardinNaeliton} and \cite{boucca2019residual}.

Let $\fa \subseteq I$ finitely generated ideals of $R$. Consider $\mathbf{f}=f_1,\dots,f_r$ and $\mathbf{a}=\:a_1,\dots,a_s$ systems of generators of $I$ and $\fa$, respectively. Let $\Phi=[c_{ij}]$ be an $r\times s$ matrix in $R$ such that
\begin{gather*}
\begin{bmatrix}
a_1 & a_2 & \cdots & a_s
\end{bmatrix}=\begin{bmatrix}
f_1 & f_2 & \cdots & f_r
\end{bmatrix}\begin{bmatrix}
c_{11} & c_{12} & \cdots & c_{1s} \\ 
c_{21} & c_{22}  & \cdots  & c_{2s} \\ 
\vdots & \vdots & \ddots  &\vdots \\ 
c_{r1} & c_{r2} & \cdots & c_{rs}
\end{bmatrix}.
\end{gather*}
Let $S = R[T_1,\dots,T_r]$ be the standard polynomial extension of $R$ in $r$ indeterminates and $$\gamma_j=\sum\limits_{i=1}^rc_{ij}T_i\in S_1$$ for $1\leq j \leq s$. Then we consider the $\mathcal{Z}$-complex, $\mathcal{Z}_\bullet(\ff;R)$, of Herzog-Simis-Vasconcelos \cite{herzog1983koszul}. 
\begin{gather*}
0\rightarrow Z_r(\ff;R)\otimes_RS[-r]\rightarrow Z_{r-1}(\ff;R)\otimes_RS[-r+1]\rightarrow\dots\rightarrow Z_0(\ff;R)\otimes_RS\rightarrow 0,
\end{gather*}
where $Z_i(\ff;R)$ is the $i$-th module of Koszul cycles of the sequence $\ff$. In the sequel we denote the sequence $\underline{\gamma}=\gamma_1,\dots, \gamma_s$ and consider the new complex given by
\begin{gather*}
  \mathcal{D}_\bullet =\Tot\big(\mathcal{Z} _\bullet(\ff;R)\otimes_SK_\bullet(\underline{\gamma},S)\big),  
\end{gather*}
where $K_{\bullet} = K_{\bullet}(\underline{\gamma},S)$ is the Koszul complex of the sequence $\underline{\gamma}$. Notice that the $i$-th component of this complex is given by 
\begin{gather*}
	\mathcal{D}_i = \bigoplus\limits_{k+j=i}(Z_k(\ff;R)\otimes_RS(-k))\otimes_S S^{\binom{s}{j}}(-j) \simeq \bigoplus\limits_{k+j=i}(Z_k(\ff;R)\otimes_RS^{\binom{s}{j}})(-i). 
\end{gather*}
Consider the ideal $\mathfrak{t}=(T_1,\dots,T_r) \subseteq S$. Tensoring the complex $\mathcal{D}_\bullet$ with the \v{C}ech complex $\check{C}^\bullet_{\mathfrak{t}}(S)$, one can proceed as on the construction of the Koszul-\v{C}ech complexes and create a $\mathbb{Z}$-indexed family of complexes $_k\mathcal{Z}^+_\bullet(\ff, \mathbf{a},\Phi)$, where for each $k \in \mathbb{Z}$, one has 
\begin{gather*} _k\mathcal{Z}^+_\bullet(\ff, \mathbf{a},\Phi):\:\:\:\:\:\:\:\:\:
	  0\rightarrow H^r_\mathfrak t(\mathcal D_{r+s})_{(k)}\rightarrow\dots\rightarrow H^r_\mathfrak t (\mathcal{D}_{r+k})_{(k)} \xrightarrow{\tau_k}(\mathcal D_{k})_{(k)}\rightarrow\dots\rightarrow(\mathcal D_0)_{(k)}\rightarrow0.
\end{gather*} 
\noindent Hence we have the following definition.
\begin{Definition}
    Let $R$ be a ring, $\fa\subseteq I$ finitely generated ideals. Consider $\mathbf{f}=\:f_1,\dots,f_r$ and $\mathbf{a}=\:a_1,\dots,a_s$ systems of generators of $I$ and $\fa$, respectively and let $\Phi=[c_{ij}]$ be an $r\times s$ matrix as above. For any integer $k$, the complex $_k\mathcal{Z}^+_\bullet =$$ _k\mathcal{Z}^+_\bullet(\ff, \mathbf{a},\Phi)$ constructed as above is called the \textbf{$k$-th residual approximation complex} with respect to the system of generators $\ff$ and $\mathbf{a}$ of $I$ and $\fa$, respectively and the matrix $\Phi$.
\end{Definition}
\noindent Among the whole family, we focus on  $k=0$ and we denote this complex simply as $\mathcal{Z}^+_\bullet$.
\[
\begin{tikzcd}\label{z0}
\mathcal{Z}^+_\bullet :& 0\arrow{r}& H^r_\mathfrak t(\mathcal D_{r+s})_{(0)}\arrow{r}&\cdots\arrow{r}&H^r_\mathfrak t (\mathcal{D}_{r})_{(0)} \arrow{r}{\tau_0}&R\arrow{r}&0.\\
\mathcal{Z}^+_\bullet: &0\arrow{r}& \mathcal{Z}^+_{s+1}\arrow{r} & \cdots\arrow{r}&\mathcal{Z}^+_{1} \arrow{r}{\tau_0}&\mathcal{Z}^+_{0} \arrow{r}&0.
\end{tikzcd},
\]
\noindent where $$\mathcal{Z}^+_{i}:=H^r_\mathfrak t (\mathcal{D}_{r+i-1})_{(0)} =\bigoplus_{j\geq i} Z_{r-s+j-1}^{\oplus n_{j}}$$ for all $i\geq 1$ and for some integer $n_{j}$. In particular $\mathcal{Z}^+_{s+1}=0$ if $\grade(I)>0$. Notice that the image of the map $\tau_0$ is an ideal of $R$. This ideal is called the \textit{disguised residual intersections} and it is denoted by $K(\mathbf{a},\ff,\Phi)$. The term disguised residual intersection finds its justification in its close relation with the concept of residual intersection, as elucidated in theorems \cite[Theorem 4.4]{HamidNaeliton} and \cite[Theorem 2.11]{hassanzadeh2012cohen}.

H. Hassanzadeh and V. Bouça \cite[Theorem 4.9, Propositions 4.11, 4.12, 4.15]{boucca2019residual} showed the desguised residual intersections are independent from the choice of the representation matrix and the systems of generators of $I$ and $\fa$. Moreover, they provided an alternative characterization of this ideal as follows

\begin{Theorem} Let $R$ be a  commutative ring and $\fa \subseteq I$ two finitely generated  ideals of $R$. Consider $\mathbf{f}=\:f_1,\dots,f_r$ and $\mathbf{a}=\:a_1,\dots,a_s$ system of generators of $I$ and $\fa$, respectively. Let $\Phi=[c_{ij}]$ be an $r\times s$ matrix in $R$ such that $[\mathbf{a}]=[\mathbf{f}]\cdot \Phi$.
Let  $K_\bullet(\ff;R)= R\langle e_1,\dots,e_r\:;\:\partial(e_i)=f_i\rangle$ be the Koszul complex equipped with the structure of differential graded algebra. Let 
$\zeta_j = \sum_{i=1}^r c_{ij}e_i$ for $1\leq j\leq s$, $\Gamma_\bullet=R\langle\zeta_1,\dots ,\zeta_s\rangle$ the algebra generated by the $\zeta$'s, and $Z_\bullet = Z_{\bullet}(\ff;R)$ be the algebra of Koszul cycles. Looking at the elements of degree $r$ in the sub-algebra of $K_{\bullet}$ generated by the product of $\Gamma_\bullet$ and $ Z_\bullet$, then 
$$K(\mathbf{a},\ff,\Phi)= \langle\Gamma_\bullet \cdot Z_\bullet\rangle _r =: \Kitt(\fa,I).$$
\end{Theorem}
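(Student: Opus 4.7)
The plan is to compute the image of $\tau_0$ explicitly, using a concrete description of $H^r_\mathfrak{t}(\mathcal{D}_r)_{(0)}$, and then identify it with the degree-$r$ piece of the DG-subalgebra $\langle\Gamma_\bullet\cdot Z_\bullet\rangle\subseteq K_\bullet(\ff;R)$ via the natural correspondence $T_i\leftrightarrow e_i$, $\epsilon_j\leftrightarrow\zeta_j$, where $\epsilon_1,\dots,\epsilon_s$ are the Koszul generators of $K_\bullet(\underline{\gamma},S)$.

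First, since $S=R[T_1,\dots,T_r]$ and $\mathfrak{t}=(T_1,\dots,T_r)$, the local cohomology module $H^r_\mathfrak{t}(S)$ is concentrated in internal degrees $\leq -r$, with its $-r$ graded piece free of rank one over $R$ generated by the symbol $(T_1\cdots T_r)^{-1}$. Applied to
$$\mathcal{D}_r = \bigoplus_{k+j=r}\bigl(Z_k(\ff;R)\otimes_R S^{\binom{s}{j}}\bigr)(-r),$$
and using additivity of $H^r_\mathfrak{t}$, one obtains
$$H^r_\mathfrak{t}(\mathcal{D}_r)_{(0)}\;\cong\;\bigoplus_{k+j=r}\;\bigoplus_{|J|=j}Z_k(\ff;R)\cdot\epsilon_J\cdot (T_1\cdots T_r)^{-1},$$
where $\epsilon_J=\epsilon_{j_1}\wedge\cdots\wedge\epsilon_{j_{|J|}}$ for $J=\{j_1<\cdots<j_{|J|}\}$.

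Second, I would trace $\tau_0$ through the Čech--Koszul bicomplex. The decisive structural observation is that, under the identification $T_i\leftrightarrow e_i$, $\epsilon_j\leftrightarrow\zeta_j$, the differentials of $\mathcal{Z}_\bullet(\ff;R)\otimes_S K_\bullet(\underline{\gamma},S)$ correspond to the multiplicative structure of $K_\bullet(\ff;R)$ applied to products of Koszul cycles with elements of $\Gamma_\bullet$. Descending a basis symbol $z\otimes\epsilon_J\otimes(T_1\cdots T_r)^{-1}$ from $H^r_\mathfrak{t}(\mathcal{D}_r)_{(0)}$ through the iterated boundary maps that compose to $\tau_0$ amounts to extracting the coefficient of $T_1\cdots T_r$ in the polynomial $z\cdot\prod_{m}\gamma_{j_m}$, which under the above identification is precisely the coefficient of the top wedge $e_1\wedge\cdots\wedge e_r$ in the product $z\cdot\zeta_{j_1}\cdots\zeta_{j_{|J|}}\in K_r(\ff;R)\cong R$.

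Ranging over all basis symbols sweeps out all products of cycles with elements of $\Gamma_\bullet$ of total degree $r$ inside $K_\bullet(\ff;R)$, giving $\Imm(\tau_0)=\langle\Gamma_\bullet\cdot Z_\bullet\rangle_r$. The principal technical obstacle is the sign and bicomplex bookkeeping: one must verify that the iterated boundary maps---including the connecting homomorphisms of local cohomology in $\check C^\bullet_\mathfrak{t}(S)$ and the total differential of $\mathcal{D}_\bullet$---compose exactly to the clean wedge-product formula, with the correct signs coming from commuting the $\epsilon_j$'s past each other and past the cycles. A secondary subtlety is surjectivity onto $\langle\Gamma_\bullet\cdot Z_\bullet\rangle_r$, which relies on the $R$-freeness of the symbols $\epsilon_J$ and the exhaustiveness of the direct sum over the cycle modules $Z_k$.
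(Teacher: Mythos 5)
This theorem is not actually proved in the paper you are reading: the authors attribute it to \cite[Theorem 4.9, Propositions 4.11, 4.12, 4.15]{boucca2019residual} and simply cite it. So there is no in-paper proof to compare against; I will instead assess your sketch on its own terms.

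Your overall strategy --- compute $H^r_\mathfrak t(\mathcal D_r)_{(0)}$ via the graded structure of top local cohomology, then trace the transgression $\tau_0$ down the Koszul--\v Cech bicomplex and identify it with ``coefficient of the top form $e_1\wedge\cdots\wedge e_r$'' under $T_i\leftrightarrow e_i$, $\epsilon_j\leftrightarrow\zeta_j$ --- is the natural route and, as far as I can tell, is the route taken in the reference. The opening reduction is fine: $(H^r_\mathfrak t(S))_{-r}\cong R\cdot(T_1\cdots T_r)^{-1}$, so $H^r_\mathfrak t(\mathcal D_r)_{(0)}\cong\bigoplus_{k+j=r}Z_k(\ff;R)^{\binom{s}{j}}$, and the target $\langle\Gamma_\bullet\cdot Z_\bullet\rangle_r$ is exactly the $R$-span of elements $\zeta_{j_1}\cdots\zeta_{j_{|J|}}\cdot z$ with $z\in Z_{r-|J|}$ (super-commutativity lets you gather the $\zeta$'s, and $Z_\bullet$ is closed under products), so the indexing of your ``basis symbols'' matches the generators of the target ideal degree by degree.

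The gap is that the ``decisive structural observation'' is asserted, not proved, and it is the entire content of the theorem. Two things need care and you do not supply them. First, the differentials of $\mathcal D_\bullet$ are \emph{contractions}, not products: the $\mathcal Z$-complex differential is contraction against $\sum_i T_i\,e_i^*$, and the Koszul differential of $K_\bullet(\underline\gamma,S)$ sends $\epsilon_j\mapsto\gamma_j$. Your claim that these ``correspond to the multiplicative structure of $K_\bullet(\ff;R)$'' is precisely the residue-type duality between contraction on the $T$-side and exterior multiplication on the $e$-side, mediated by the isomorphism $(H^r_\mathfrak t(S))_{-r}\cong R$; this duality has to be exhibited explicitly on representatives, including a computation of the iterated zig-zag through $\check C^\bullet_\mathfrak t(S)\otimes\mathcal D_\bullet$ with all signs. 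Without it, the central equality $\tau_0(z\otimes\epsilon_J)=\bigl[z\cdot\zeta_{j_1}\cdots\zeta_{j_{|J|}}\bigr]_{e_1\wedge\cdots\wedge e_r}$ is unsubstantiated. Second, surjectivity of $\tau_0$ onto $\langle\Gamma_\bullet\cdot Z_\bullet\rangle_r$ is not simply a matter of ``$R$-freeness of the $\epsilon_J$''; one must check that no cancellation between distinct $(k,J)$-summands collapses the image, i.e.\ that the formula you claim for $\tau_0$ really does hit every generator $\zeta_{j_1}\cdots\zeta_{j_{|J|}}\cdot z$ (this follows once the wedge-coefficient description is actually established, so it is subordinate to the first point). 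Until the transgression is worked out explicitly, the proof is a plausible outline rather than a proof, and it is exactly the part you defer (``the principal technical obstacle'') that carries the load.
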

\noindent We call $\Kitt(\fa,I)$ by the \textit{Kitt ideal} of $I$ with respect to $\fa$. 

In what follows, we will list some of the properties of $Kitt$ ideals. Some of these properties are straightforward to verify, while for others we cite appropriate references.
\begin{Theorem}\label{Main} Let $R$ be a Noetherian ring, $\fa\subseteq I$ two  ideals of $R$, $J:=\fa:_RI$ and $s$ a non-negative integer. Suppose that $I$ and $\fa$ are generated by $r$ and $s$ elements, respectively. 
\begin{enumerate}[(i)]
\item The ideal $\Kitt(\mathfrak a,I)$ does not depend on the choice of generators of $\fa$ and $I$ or the representative matrix {\normalfont \cite[Propositions 4.11, 4.12 and 4.15]{boucca2019residual}};
\item Denoting by $\tilde{H}_\bullet$ the sub-algebra of $K_{\bullet}$ generated by the representatives of Koszul homologies, then $\Kitt(\mathfrak a,I)= \fa+\langle\Gamma_\bullet \cdot \tilde{H}_\bullet\rangle_r$. Furthermore $\Kitt(\mathfrak a,I)\supseteq \Fitt_0(I/\fa)$ {\normalfont \cite[Theorem 4.23]{boucca2019residual}};  
\item One always has that $\Kitt(\fa,I)\subseteq J$ and they determine the same Zariski closed set. In addition if $\mu(I/\fa)\leq 1$, then $\Kitt(\mathfrak a,I)=J$ {\normalfont \cite[Theorem 4.4]{HamidNaeliton}, \cite[Theorem 2.11]{hassanzadeh2012cohen}};
\item  $\Kitt(0,I)= 0:_{R}I $, $\Kitt(\fa,\fa)=(1)$ and $\Kitt(\fa,(1))=\fa$; 
\item Given ideals $\fa_1 \subseteq \fa$ and $\fa \subseteq I_1 \subseteq I$, then $\Kitt(\fa,I)\subseteq \Kitt(\fa,I_{1})$ and $\Kitt(\fa_{1},I)\subseteq \Kitt(\fa,I)$ {\normalfont \cite[Theorem 4.23]{boucca2019residual}};
\item The Kitt ideal localizes. That is , $\Kitt(\fa,I)_{\fp}=\Kitt(\fa_{\fp},I_{\fp})$ for any prime ideal $\fp$ of $R$;
\item Kitt specializes. Specifically, for any regular element $f_{0}\in \fa$\begin{gather*}
    \frac{\Kitt(\fa,I)}{(f_{0})}=\Kitt\bigg(\frac{\fa}{(f_{0})},\frac{I}{(f_{0})}\bigg)
\end{gather*}  
{\normalfont \cite[Theorem 4.27]{boucca2019residual}};
\item Let $\grade(I)=g$ and  $J$  an $s$-residual intersection. If $s \leq g+1$, then $\Kitt(\fa,I)=J$ {\normalfont \cite[Proposition 5.10]{boucca2019residual}};
\item If $r\leq s$, then $$\Kitt(\fa,I)=\fa+\Fitt_{0}(I/\fa)+\sum_{{\{i_{1},\ldots,i_{r-2}\}\subset \{1,\ldots,s\}}}\Kitt((a_{i_{1}},\dots,a_{i_{r-2}}),I)$$ {\normalfont \cite[Proof of Theorem 4.7]{tarasova2021generators};}
\item If $r\geq s$, then $\Kitt(\fa,I)=\sum_{i=1}^{s}\Kitt((a_{1},\ldots,\hat{a_{i}},\ldots,a_{s}),I)+\zeta_{1}\cdot\ldots\cdot\zeta_{s}\cdot Z_{r-s}$;
\item Let $\fb$ be any ideal of $R$ with $I\cap \fb=0$. Then $\overline{\Kitt_R(\fa,I)}=\Kitt_{\bar{R}}(\bar{\fa},\bar{I})$, where $\bar{}$ is the image of canonical epimorphism of $R$ to $R/\fb$ { \normalfont \cite[Proof of Theorem 4.11]{tarasova2021generators}};
\item  Assume $\grade(I)\geq 1$ and set  $\Ht(J)=\alpha$. Suppose that $s\geq r$ and  $\mu(I_{\fp})=r$ for all prime ideals $\fp\supseteq I$ with $\Ht(\fp)\geq \alpha-1$. Then the arithmetic rank of $J$ is at most $s+rs-r^2+1$, \cite[Theorem 2]{BrunsSchwanzl} and \cite[Corollary4.3]{FreeApproch}. 
\item Suppose that $R$ is Cohen-Macaulay and $J$ is an $s$-residual intersection of $I$. If  $I$ satisfies $\SD_1$, then $\Kitt(\fa,I)=J$ and it is a Cohen-Macaulay ideal {\normalfont \cite[Theorem 5.1]{boucca2019residual}};
\item Suppose that $R$ is Cohen-Macaulay and $J$ is an $s$-residual intersection of $I$. If $I$ satisfies the $G_{s}$ condition and is weakly $(s-2)-$residually $S_{2}$, then $\Kitt(\mathfrak a,I)=J$. {\normalfont \cite[Theorem 4.11]{tarasova2021generators}.}
\end{enumerate}
\end{Theorem}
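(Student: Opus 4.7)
My plan is to treat the theorem as an assembly. Items (i), (ii), (iii), (v), (vii), (viii), (ix), (xi), (xii), (xiii) carry explicit references and will be cited directly, or the quoted proofs imported into the present notation. The remaining items --- (iv), (vi), and (x) --- require verification here, and for all three I would work from the subalgebra description
$$\Kitt(\fa,I) = \langle \Gamma_\bullet \cdot Z_\bullet\rangle_r$$
recalled in the preceding theorem, where $\Gamma_\bullet = R\langle \zeta_1, \ldots, \zeta_s\rangle$ and $Z_\bullet = Z_\bullet(\ff;R)$ sit inside the Koszul DG-algebra $K_\bullet = K_\bullet(\ff;R)$. Using graded commutativity, $\langle \Gamma_\bullet \cdot Z_\bullet\rangle_r$ decomposes as $\bigoplus_{k+j=r}\Gamma_k \cdot Z_j$, and this is the form I will exploit.

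For (iv) I would check each of the three degenerate identities by choosing optimal generators and reading off $\Gamma_\bullet \cdot Z_\bullet$ in the appropriate degree. Taking $s = 0$ collapses $\Gamma_\bullet$ to $R$ and reduces $\Kitt(0,I)$ to the coefficient ideal of $Z_r(\ff;R) \subseteq R \cdot e_1 \wedge \cdots \wedge e_r$, which is $0:_R I$ by the explicit description of top Koszul cycles. For $\Kitt(\fa,\fa)$ I would use a common generating sequence for both ideals so that $r = s$ and $\Phi$ is the identity, making $\zeta_1 \cdots \zeta_r$ equal to $e_1 \wedge \cdots \wedge e_r$ with coefficient $1$. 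For $\Kitt(\fa, (1))$, take $r = 1$ and $f_1 = 1$; then $\zeta_j = a_j e_1$, $Z_1 = 0$, and the degree-$1$ summand of $\Gamma_\bullet \cdot Z_\bullet$ yields coefficient ideal $\fa$. Property (vi) is then a formality: the Koszul complex, the cycle modules $Z_\bullet$, the subalgebra $\Gamma_\bullet$, and hence $\langle \Gamma_\bullet \cdot Z_\bullet\rangle_r$ are all built from finite direct sums and subquotients of free modules, so each commutes with localization at a prime $\fp$.

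For (x) the plan is to decompose an arbitrary generator $\zeta_J \cdot z$ of $\langle \Gamma_\bullet \cdot Z_\bullet\rangle_r$, with $J \subseteq \{1, \ldots, s\}$ and $z \in Z_{r-|J|}$, according to whether $|J| < s$ or $|J| = s$. In the first case I pick some $i \notin J$; the product $\zeta_J \cdot z$ then lies in the sub-object built from $\zeta_1, \ldots, \hat{\zeta_i}, \ldots, \zeta_s$ and $Z_\bullet$, whose degree-$r$ component is precisely $\Kitt((a_1, \ldots, \hat{a_i}, \ldots, a_s), I)$, contributing to the first summand. In the second case $\zeta_J = \zeta_1 \cdots \zeta_s$ and $z \in Z_{r-s}$, giving the second summand. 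The reverse containment is immediate from (v) together with the visible membership $\zeta_1 \cdots \zeta_s \cdot Z_{r-s} \subseteq \langle \Gamma_\bullet \cdot Z_\bullet\rangle_r$. The only real obstacle, and it is a modest one, is bookkeeping in the exterior algebra: when identifying the coefficient ideal of a degree-$r$ element of $K_\bullet$ one must consistently track the signs arising from reordering $\zeta_J$ and account for cancellations between different decompositions of the same element, but this is a routine verification rather than a substantive difficulty.
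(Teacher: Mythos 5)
The paper gives no explicit proof of this theorem: it is a compilation of properties, each either cited to a reference (items (i), (ii), (iii), (v), (vii), (viii), (ix), (xi), (xii), (xiii)) or left implicit as ``can be proved easily'' (items (iv), (vi), (x)), and your proposal matches that structure exactly, citing where the paper cites and supplying correct short arguments for the uncited items from the description $\Kitt(\fa,I)=\langle\Gamma_\bullet\cdot Z_\bullet\rangle_r=\sum_{k+j=r}\Gamma_k\cdot Z_j$. The three verifications you sketch — $Z_r$ giving $0:_RI$ for (iv), exactness of localization for (vi), and splitting the generating set $\{\zeta_J\cdot z\}$ by $|J|<s$ versus $|J|=s$ for (x), with the reverse inclusion from (v) — are all sound; the ``sign/cancellation bookkeeping'' you flag in (x) is actually a non-issue, since you are simply partitioning an $R$-module generating set.
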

 There are a few definitions in the last theorem that need to be remembered.
\begin{Definition}\label{Gs} Let $(R,\fm)$ be a $d$-dimensional Cohen-Macaulay local ring, $I$ an ideal of $R$ with $\grade(I) = g$ and $s$ a integer
\begin{enumerate}[(i)]
\item One says that $I$ satisfies the $G_s$ condition if $\mu_{R_{\fp}}(I_{\fp})\leq \htt(\fp)$ for any prime ideal $ \fp \in \V(I)$ of height at most $s-1$;
\item  One says that $I$ is weakly $(s-2)-$residually $S_2$ if, for every $g\leq i\leq s-2$ and any geometric $(s-2)-$residual intersection $J=\fa:_RI$ of $I$, the ring $R/J$ satisfies the Serre's condition $S_{2}$, that is $\depth\big((R/J)_{\fp}\big) \geq \min\{2,\dim\big((R/J)_{\fp}\big)\};$
\item Let $\mathbf{f} = f_1,\dots,f_r$ be a system of generators of $I$ and $k$ an integer. One says that
$I$ satisfies the $\SD_k$ condition if $\depth(H_i(\mathbf{f};R))\geq \min\{d-g,d-r+i+k\}$ for all $i\geq 0$; also $\SD$ stands for
 $\SD_0$. Similarly, one says that $I$ satisfies the
sliding depth condition on cycles, $\SDC_k$, at level $t$, if
$\depth(Z_i(\mathbf{f};R))\geq \min\{d-r+i+k, d-g+2, d\}$ for all $r-g-t\leq i\leq r-g$. 
\item The arithmetic rank of an ideal $J$ is the minimum number of generators of an 
ideal with the same radical as that of $J$. In item $(xii)$ of \autoref{Main}, 
the quotient ideal $J = \mathfrak{a} : I$ is not necessarily a residual 
intersection. The condition on $I$ that $\mu(I_{\mathfrak{p}}) = r$ for all 
prime ideals $\mathfrak{p} \supseteq I$ with $\text{ht}(\mathfrak{p}) \geq \alpha - 1$ 
is called $r$-minimally generated in \cite{FreeApproch}. Classes of 
$r$-minimally generated ideals contain complete intersections; other examples 
can be found in \cite{FreeApproch}. In the case where $J = \mathfrak{a} : I$ 
is an $s$-residual intersection of a complete intersection, 
\cite[Theorem 4.7]{ARAresidualintersections} proves that the arithmetic rank of $J$ is at most 
$r + rs - r^2 + 1$, which improves the bound in item $(xii)$ of 
\autoref{Main} since $s \geq r$.
\end{enumerate}

\end{Definition}
In \cite{boucca2019residual}  it is conjectured that, for an $s-$residual intersection $J=\frak{a}:_RI$ in a Cohen-Macaulay ring, one always has $J=\Kitt(\frak{a},I)$.  However, in December 2022, L. Bus\'e and V. Bou\c{c}a  found a counterexample to that conjecture.
\begin{Example}
{\normalfont Let $\mathbb{K}$ be a field (infinite or of large characteristic), 
$R = \mathbb{K}[x_{1}, \ldots, x_{4}]$, and $I = (x_{3}, x_{4})(x_{1}, x_{2}^{2} - x_{3}x_{4})$. 
Let $M$ be a random matrix with entries of degrees $\{2, 2, 1, 1\}$, and 
define $\mathfrak{a}$ as the ideal generated by the entries of 
$\text{gens}(I) \cdot M$. Then $J = \mathfrak{a} :_R I$ is a $4$-residual 
intersection, but $J \neq \text{Kitt}(\mathfrak{a}, I)$.}
\end{Example}
In this example,   $\depth(Z_{2}(I))=2$ and $\Fitt_{2}(I)$ is $(x_{1},\dots,x_{4})-$primary ideal. Hence $I$ satisfies $G_{\infty}$ however it is not $\SD$ neither weakly $2$-residually $S_2$.

\section{Generic Kitt Ideals}
\subsection{Generic Residual Approximation Complex}
Let $R$ be a ring and $I$ a finitely generated ideal of $R$. In this subsection, we generalize the construction $k$-th residual approximation complex to the generic case. Let $s$ be a positive integer, $\mathbf{f}=\:f_1,\dots,f_r$ a system of generators of $I$ and 
$S = R[U_{ij}\:\:;\:\: 1 \leq i \leq r,\: 1 \leq j \leq s]$ the polynomial extension of $R$ in $rs$ indeterminates. Define $S^{\mathfrak{g}} := S[T_1,\dots,T_r]$ and $\gamma= \gamma_1,\dots, \gamma_s \in S^{\mathfrak{g}}$ such that
\begin{gather*}
    \begin{bmatrix}
\gamma_1 & \gamma_2 & \cdots  &\gamma_s 
\end{bmatrix} = \begin{bmatrix}
T_1 & T_2 & \cdots  & T_r 
\end{bmatrix}\begin{bmatrix}
U_{11} & U_{12}  &\cdots  & U_{1s} \\ 
U_{21} & U_{22} & \cdots & U_{2s} \\ 
\vdots & \vdots &\ddots  & \vdots \\ 
U_{r1} & U_{r2} &\cdots  & U_{rs} 
\end{bmatrix}. 
\end{gather*}
Consider the generic $\mathcal{Z}$-approximation complex $\mathcal{Z}^{\mathfrak{g}}_{\bullet}(\mathbf{f}) := \mathcal{Z}(\mathbf{f}, S)$
\[\mathcal{Z}^{\mathfrak{g}}_{\bullet}(\mathbf{f}):\:\:
\begin{tikzcd}
    0 \arrow{r} & Z_{r}(\mathbf{f}; S)\otimes_SS^{\mathfrak{g}}[-r] \arrow{r} & \cdots \arrow{r} & Z_{1}(\mathbf{f}; S)\otimes_SS^{\mathfrak{g}}[-1] \arrow{r} & S^{\mathfrak{g}} \arrow{r} & 0,
\end{tikzcd}
\] where $Z_{\bullet}(\mathbf{f}; S)$ is the algebra of cycles of the Koszul complex $K(\ff;S)$. Let $K_{\bullet}^{\mathfrak{g}}$ denote the Koszul complex $K(\gamma;S^{\mathfrak{g}})$ and define $\mathcal
D^{\mathfrak{g}}_{\bullet} = \Tot(K_{\bullet}^{\mathfrak{g}} \otimes_{S^{\mathfrak{g}}} \mathcal{Z}^{\mathfrak{g}}_{\bullet}(\mathbf{f}))$ the totalization of $K_{\bullet}^{\mathfrak{g}} \otimes_{S^{\mathfrak{g}}} \mathcal{Z}^{\mathfrak{g}}_{\bullet}(\mathbf{f})$. Thus
\[
    \mathcal{D}^{\mathfrak{g}}_{\bullet}:\:\: 
    \begin{tikzcd}
        0 \arrow{r}& \mathcal{D}^{\mathfrak{g}}_{r+s} \arrow{r} & \mathcal{D}^{\mathfrak{g}}_{r+s-1} \arrow{r} & \cdots \arrow{r} & \mathcal{D}^{\mathfrak{g}}_{1} \arrow{r} & \mathcal{D}^{\mathfrak{g}}_0 = S^{\mathfrak{g}} \arrow{r} & 0,
    \end{tikzcd}
\]
where 
\[ \mathcal{D}^{\mathfrak{g}}_{i} = \bigoplus_{\substack{l+j = i\\ 0\leq l \leq s,\: 0\leq j \leq r }}K_{l}^{\mathfrak{g}}\otimes_{S^{\mathfrak{g}}}(Z_{j}(\mathbf{f}, S)\otimes_SS^{\mathfrak{g}}[-j]) = \bigoplus_{\substack{l+j = i\\ 0\leq l \leq s,\: 0\leq j \leq r }}[Z_{j}(\mathbf{f}, S)\otimes_SS^{\mathfrak{g}}]^{\binom{s}{l}}[-i].
\]
Considering the ideal $\mathbf{t} = (T_1,\dots,T_r) \subseteq S^{\mathfrak{g}}$, one can apply the  same procedure of the construction of the Koszul-Čech complex to the double complex 
\begin{equation*}
    \mathcal{D}^{\mathfrak{g}}_{\bullet} \otimes {\mathcal{C}}^{\bullet}_{\mathbf{t}}(S^{\mathfrak{g}})
\end{equation*}
and gets a $\mathbb{Z}$-indexed family of complexes ${}_k\mathcal{Z}^{+\mathfrak{g}}_{\bullet}(s,\ff)$, where ${}_k\mathcal{Z}^{+\mathfrak{g}}_{\bullet}(s,\ff)$ is
\[
\begin{tikzcd}
    0 \arrow{r} & H^r_{\mathbf{t}}(\mathcal{D}^{\mathfrak{g}}_{r+s})_{(k)} \arrow{r} & \cdots \arrow{r}{\psi_k^\fg}  &  H^r_{\mathbf{t}}(\mathcal{D}^{\mathfrak{g}}_{k+r})_{(k)}\arrow{r}{\tau_k^\fg}  & (\mathcal{D}^{\mathfrak{g}}_{k})_{(k)} \arrow{r}{\mu_k^\fg} & \cdots    (\mathcal{D}^{\mathfrak{g}}_{0})_{(k)} \arrow{r} & 0
\end{tikzcd}
\]
for each $k \in \mathbb{Z}$. Analyzing each component of complex ${}_k\mathcal{Z}^{+\mathfrak{g}}_{\bullet}$, one concludes that the $i$th component is 
\begin{gather*}
    {}_k\mathcal{Z}^{+\mathfrak{g}}_{i} = 
    \bigoplus_{\substack{l+j = i\\ 0\leq l \leq s,\: 0\leq j \leq r }}(Z_{j}(\mathbf{f}; S)\otimes_SS^{\mathfrak{g}})^{\binom{s}{l}}_{(k-i)},\:\:\:\:\textrm{if $i \leq k$};\\
    {}_k\mathcal{Z}^{+\mathfrak{g}}_{i} = \bigoplus_{\substack{l+j =  r + i - 1\\ 0\leq l \leq s,\: 0\leq j \leq r }}(Z_{j}(\mathbf{f}; S)\otimes_SH^r_{\mathbf{t}}(S^{\mathfrak{g}}))^{\binom{s}{l}}_{(k-i)},\:\:\:\:\textrm{if $i > k$}.
\end{gather*}
Given $k$ an integer, the complex ${}_k\mathcal{Z}^{+\mathfrak{g}}_{\bullet}(s,\ff)$ as constructed above is called the {\it $s$-generic $k$-th residual approximation complex} with respect to $\mathbf{f}$. The image of the map $\tau_0^\fg$ is an ideal of $S$. This ideal is called \textit{$s$-generic disguised residual intersection}.
We denote this ideal by $K(s,\ff)$.
Similarly to the ordinary case, one has $K(s,\ff)=\Kitt((\alpha),(\ff)S)$, where 
\begin{gather*}
    \begin{bmatrix}
\alpha_1 & \alpha_2  & \cdots & \alpha_s
\end{bmatrix}
= 
\begin{bmatrix}
f_1 & f_2  & \cdots & f_r
\end{bmatrix}\begin{bmatrix}
U_{11} & U_{12} & \dots & U_{1s} \\ 
U_{21} & U_{22}  & \dots  & U_{2s} \\ 
\vdots & \vdots & \ddots & \vdots\\ 
U_{r1} & U_{r2} & \dots & U_{rs}
\end{bmatrix}.
\end{gather*}    

Throughout this work, we refer to the $s$-generic disguised residual intersection of $I$ with respect to the system of generators $\ff$ simply as the \textit{$s$-generic Kitt of $I$ with respect to $\ff$}, denoted by $\Kitt^{\fg}(s, \ff)$. Unlike the ordinary Kitt, the $s$-generic Kitt depends explicitly on the choice of the system of generators of $I$, as demonstrated in the following example.

\begin{Example} {\normalfont 
    Let $\mathbb{K}$ be a field, $R = \mathbb{K}[x,y]$ and $I$ an ideal of $R$ with two different system of generators $\ff=\:x^2 + y,\:x^5$ and $\ff'=\:x^2 + y,\:x^5 + x^2 + y$. Using Macaulay2 \footnote{ The calculation is inside the package {\sc Kitt} available in \url{https://sites.google.com/view/s-hamid-hassanzadeh/research?authuser=0}} one obtains
    \begin{gather*}
       \Kitt^{\mathfrak{g}}(2,\ff) = (x^5U_{22} + x^2U_{12} + yU_{12}, x^5U_{21} + x^2U_{11} + yU_{11}, U_{12}U_{21} - U_{11}U_{22}),\\
        \Kitt^{\mathfrak{g}}(2,\ff') = (x^5U_{22} + x^2U_{12} + x^2U_{22} + yU_{12}, x^5U_{21} + x^2U_{11} + x^2U_{21} + yU_{11} + yU_{21}, U_{12}U_{21} - U_{11}U_{22}).
    \end{gather*}
Hence $\Kitt^{\mathfrak{g}}(2,\ff) \neq \Kitt^{\mathfrak{g}}(2,\ff')$.} 
\end{Example} 
In this example, if we consider $U = [U_{ij}]$, $[\mathbf{a}] = [\mathbf{f}]U$, and $[\mathbf{a}'] = [\mathbf{f}']U$, we get that $\fa = (\mathbf{a})$ is different from $\fa' = (\mathbf{a}')$. Hence it is not surprising that we end up with $$\Kitt(\fa,IS)=\Kitt^{\mathfrak{g}}(2,\mathbf{f}) \neq \Kitt^{\mathfrak{g}}(2,\mathbf{f}') = \Kitt(\fa',IS).$$
Although the example above demonstrates the sensitivity of the s-generic Kitt to the choice of generators of $I$, it is important to note that this sensitivity is controlled. In other words,  the $s$-generic Kitt is unique up to \textit{universal equivalence}, which we recall the definition  in the following.
\begin{Definition}
Let $(R, I)$ and $(S,K)$ be two pairs, where $R$ and $S$ are rings and $I \subset R$, $K\subset S$ are ideals or $I = R$ or $K=S$. One says that $(R,I)$ and $(S,K)$ are universal equivalent if there are finite sets of indeterminates $X$, $Z$ over $R$, $S$, respectively and a ring isomorphism $\phi: R[X] \longrightarrow S[Z]$ such that $\phi(IR[X]) = KS[Z]$. 
\end{Definition}
It is straightforward to check that universal equivalence is indeed an 
equivalence relation. Drawing inspiration from Huneke and Ulrich's proof 
establishing the universal independence of the generic $s$-residual 
intersection with respect to the choice of the system of generators of $I$ 
presented in \cite{huneke1990generic}, we demonstrate that this universal 
equivalence similarly applies to the $s$-generic Kitt ideal. This proof 
relies on the fact that the Kitt structure commutes with ring automorphisms.
\begin{Proposition}
\label{EssentialEquivalence}
    Let $R$ be a ring, $I$ a finitely generated ideal of $R$ and $s$ a positive integer. Consider $\ff=\:f_1,\dots,f_r$ and $\ff'=\: f_1',\dots,f_{r'}'$ systems of generators of $I$ and let $R[U]$, $R[V]$ be polynomial extensions of $R$ in $rs$ and $r's$ indeterminates, respectively. Then $(R[U],\Kitt^{\mathfrak{g}}(s,\ff))$ and $(R[V],\Kitt^{\mathfrak{g}}(s,\ff'))$ are universally equivalent. 
\end{Proposition}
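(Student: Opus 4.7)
The plan is to use transitivity of universal equivalence to reduce to the case where $\ff'$ is obtained from $\ff$ by adjoining a single extra generator, and then to handle this case by an explicit linear automorphism of the generic polynomial ring.

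Setting $\ff'' = \ff \cup \ff'$, one sees that $\ff''$ differs from each of $\ff$ and $\ff'$ by the sequential addition of finitely many generators, so by transitivity it suffices to treat the case $\ff^* = f_1,\dots,f_r,f_{r+1}$ with $f_{r+1}\in I$. Write $f_{r+1} = \sum_{i=1}^{r} c_i f_i$ with $c_i\in R$, let $U^* = (U_{ij}^*)_{1\leq i\leq r+1,\,1\leq j\leq s}$, and define the $R$-algebra automorphism $\phi\colon R[U^*]\to R[U^*]$ by $\phi(U_{ij}^*) = U_{ij}^* - c_i U_{r+1,j}^*$ for $i\leq r$ and $\phi(U_{r+1,j}^*) = U_{r+1,j}^*$. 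A direct computation gives $\phi(\alpha_j^*) = \sum_{i=1}^{r} f_i U_{ij}^*$ for each $j$. Hence, under the identification $R[U^*] = R[U][U_{r+1,1}^*,\dots,U_{r+1,s}^*]$ (with $U_{ij} = U_{ij}^*$ for $i\leq r$), one has $\phi(\fa^*) = \fa R[U^*]$, where $\fa = (\alpha_1,\dots,\alpha_s)\subset R[U]$ is the generic ideal defining $\Kitt^{\fg}(s,\ff)$.

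Since $\phi$ fixes $R$ (and hence $I$), and the Kitt construction depends only on the pair $(\fa,I)$ by item (i) of \autoref{Main}, one obtains $\phi(\Kitt^{\fg}(s,\ff^*)) = \Kitt(\fa R[U^*], IR[U^*])$. The construction of Kitt via Koszul cycles is stable under the flat polynomial extension $R[U]\hookrightarrow R[U^*]$, which yields $\Kitt(\fa R[U^*], IR[U^*]) = \Kitt^{\fg}(s,\ff)R[U^*]$. Combining this identity with the canonical inclusion $R[U]\hookrightarrow R[U^*]$, the automorphism $\phi$ exhibits the required universal equivalence. The main technical point in this proof is to verify the polynomial-extension compatibility of Kitt, which reduces to checking that both the Koszul cycles $Z_i$ and the algebra $\langle \Gamma_\bullet\cdot Z_\bullet\rangle_r$ used in the characterization of Kitt commute with flat base change — a standard but essential verification, and a point where special care is needed precisely because the generic Kitt is sensitive to the generating set in general.
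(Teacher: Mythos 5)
Your proof is correct and follows essentially the same route as the paper: reduce by transitivity to the case of adjoining a single generator, then produce an explicit unitriangular $R$-algebra automorphism of the generic polynomial ring carrying the generic sequence $\alpha^*$ to $\alpha$, and invoke that Kitt commutes with ring isomorphisms (and with flat polynomial base change, used to relate $\Kitt(\fa R[U^*], IR[U^*])$ to $\Kitt^{\fg}(s,\ff)R[U^*]$). The only difference is cosmetic: you work inside the single ring $R[U^*]$ with $(r+1)s$ indeterminates and exhibit the equivalence via the identification $R[U][U^*_{r+1,\bullet}]\cong R[U^*]$, while the paper works in the larger ring $R[U,V]$ on $rs+(r+1)s$ indeterminates and cross-maps $U$-variables to $V$-variables; your version is slightly more economical but amounts to the same change of variables. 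Your closing remark correctly flags the one point requiring justification — the behavior of Kitt under isomorphisms and flat extension — which the paper also outsources, citing the second author's thesis.
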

\noindent\textit{Proof:} Suppose firstly that $\ff'=\:f_1,\dots,f_r, g$. Let $a_1,\dots,a_s \in R[U]$ and $a_1',\dots,a_s' \in R[V]$ such that
\begin{gather*}
   \begin{bmatrix}
a_1 & a_2 & \cdots  &a_s 
\end{bmatrix} = \begin{bmatrix}
f_1 & f_2 & \cdots  & f_r 
\end{bmatrix}\begin{bmatrix}
U_{11} & U_{12}  &\cdots  & U_{1s} \\ 
U_{21} & U_{22} & \cdots & U_{2s} \\ 
\vdots & \vdots &\ddots  & \vdots \\ 
U_{r1} & U_{r2} &\cdots  & U_{rs} 
\end{bmatrix}\\
   \begin{bmatrix}
a_1' & a_2' & \cdots  &a_s' 
\end{bmatrix} = \begin{bmatrix}
f_1 & f_2 & \cdots  & f_r & g
\end{bmatrix}\begin{bmatrix}
V_{11} & V_{12}  &\cdots  & V_{1s} \\ 
V_{21} & V_{22} & \cdots & V_{2s} \\ 
\vdots & \vdots &\ddots  & \vdots \\ 
V_{r1} & V_{r2} &\cdots  & V_{rs}\\
V_{r+1,1} &V_{r+1,2} & \cdots & V_{r+1,s}
\end{bmatrix}
\end{gather*}
By definition of $s$-generic Kitt, one has that $$\Kitt^{\mathfrak{g}}(s,\ff) = \Kitt((a),(\ff)R[U])\:\:\:\:\:\:\:\:\:\:\:\:\textrm{and} \:\:\:\:\:\:\:\:\:\:\:\: \Kitt^{\mathfrak{g}}(s,\ff') = \Kitt((a'),(\ff,g)R[V]).$$ 
As $g$ belongs to $I$, there are $c_1,\dots,c_r \in R$ such that $g = \sum_{k=1}^rc_kf_k$. Thus consider the $R$-algebra homomorphism $\phi:R[U,V] \longrightarrow R[U,V]$ such that
\begin{gather*}
    \phi(U_{ij}) = V_{ij} + c_iV_{r+1,j}\:\:\:\textrm{for all $1\leq i \leq r$ and $1\leq j \leq s$};\\
    \phi(V_{ij}) = \begin{cases}
                    U_{ij},\:\:\textrm{if $1\leq i \leq r$ and $1\leq j \leq s$}\\
                    V_{r+1,j}, \:\:\textrm{if $i = r+1$.}
                    \end{cases}
\end{gather*}
Notice that $\phi$ is an isomorphism and 
$$\phi(a_i) =\phi\bigg(\sum_{k=1}^rf_kU_{ki}\bigg) = \sum_{k=1}^r(f_kV_{ki}) + V_{r+1,i}\bigg(\sum_{k=1}^rc_kf_k\bigg) = \sum_{k=1}^rf_kV_{ki} + gV_{r+1,i} = a_{i}',$$
which implies that $\phi((a)R[U,V]) = (a')R[U,V]$. Furthermore, one has $$\phi((\ff)R[U,V]) = (\ff)R[U,V] = (\ff,g)R[U,V],$$ because $(\ff)R[U,V] = (\ff,g)R[U,V]$. Then one has
\begin{gather*}
    \phi\big(\Kitt^{\mathfrak{g}}(s,\ff)(R[U])[V]\big) = \phi\big(\Kitt((a),(\ff)R[U])(R[U])[V]\big) \\ = \phi\big(\Kitt((a)R[U,V],(\ff)R[U,V])\big) = \Kitt(\phi((a)R[U,V]),\phi((\ff)R[U,V])) \\ = \Kitt((a')R[U,V],(\ff,g)R[U,V]) = \Kitt((a'),(\ff,g)R[V])(R[V])[U] \\ = \Kitt^{\mathfrak{g}}(s,\ff')((R[V])[U]). 
\end{gather*}
Here the equality in the second line comes from the behavior of Kitt under the action of homomorphism as studied in \cite{KevinThesis}.
Hence $(R[U],\Kitt^\mathfrak{g}(s,\ff))$ and $(R[V],\Kitt^\mathfrak{g}(s,\ff'))$ are universally equivalent. More generally, let $\ff$ and $\ff'$ be two systems of generators of $I$. Set $\ff,\ff' := f_1,\dots,f_r,f_1',\dots,f_{r'}'$. Using the previous argument, induction, and the fact that universal equivalence is an equivalence relation, one concludes that  
\begin{gather*}
    (R[U],\Kitt^\mathfrak{g}(s,\ff)) \equiv (R[W],\Kitt^\mathfrak{g}(s,\ff,\ff')),\\  (R[V],\Kitt^\mathfrak{g}(s,\ff')) \equiv (R[W],\Kitt^\mathfrak{g}(s,\ff,\ff')).
\end{gather*}
Hence the statement follows.\qed\\[0.3cm]

In his Ph.D. thesis, the second author delves into additional standard properties of generic Kitt ideals and explores their behavior under homomorphisms, including an examination of higher-order generic Kitt ideals.

\noindent A noteworthy property of the generic Kitt is its ability to specialize into the ordinary Kitt, as shown in the next proposition.
\begin{Proposition}
\label{Prop3.1}
    Let $R$ be a ring and $\fa\subseteq I$ finitely generated ideals of $R$. Consider $\mathbf{f} = f_1,\dots,f_r$ and  $\mathbf{a} = a_1,\dots,a_s$ systems of generators of $I$ and $\fa$, respectively and $\Phi = [c_{ij}]_{r\times s}$ such that
 \[
    \begin{bmatrix}
a_1 & a_2 & \cdots  &a_s 
\end{bmatrix} = \begin{bmatrix}
f_1 & f_2 & \cdots  & f_r 
\end{bmatrix}\begin{bmatrix}
c_{11} & c_{12}  &\cdots  & c_{1s} \\ 
c_{21} & c_{22} & \cdots &c_{2s} \\ 
\vdots & \vdots &\ddots  & \vdots \\ 
c_{r1} & c_{r2} &\cdots  &c_{rs} 
\end{bmatrix} = \begin{bmatrix}
f_1 & f_2 & \cdots  & f_r 
\end{bmatrix}\Phi. 
\]
Let $S = R[U_{ij}\:\:;\:\:1\leq i \leq r,\:\:1\leq j \leq s]$,  $\Kitt^{\mathfrak{g}}(s,\ff)$ be the $s$-generic Kitt ideal with respect the generating set $\mathbf{f}$ and the sequence $\mathbf{x} = U_{11} - c_{11},\dots, U_{rs} - c_{rs}$ in $S$, then  
$$\frac{\Kitt^{\mathfrak{g}}(s,\ff) + (\mathbf{x})}{(\mathbf{x})} = \Kitt(\mathfrak{a},I).$$
\end{Proposition}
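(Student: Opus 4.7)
The plan is to exploit the algebra description of the Kitt ideal via the Bou\c{c}a--Hassanzadeh theorem and combine it with the flatness of $S$ over $R$. Both sides of the asserted equality will be identified as the image of the same sub-DG-algebra of $K_\bullet(\ff;R)$ under the specialization map $U_{ij}\mapsto c_{ij}$.

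First I would set up the $R$-algebra surjection $\pi\colon S\twoheadrightarrow R$ defined by $U_{ij}\mapsto c_{ij}$, whose kernel is $(\mathbf{x})S$, giving the canonical isomorphism $S/(\mathbf{x})S\cong R$. Since $S = R[U_{ij}]$ is a polynomial extension of $R$, it is $R$-flat, and the Koszul complex together with its cycles base-change correctly:
$$K_\bullet(\ff;S) = K_\bullet(\ff;R)\otimes_R S, \qquad Z_k(\ff;S) = Z_k(\ff;R)\otimes_R S.$$
Reducing modulo the regular sequence $\mathbf{x}$ therefore recovers $K_\bullet(\ff;R)$ and $Z_k(\ff;R)$ respectively. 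In particular, $\pi$ extends to a surjective map of Koszul DG-algebras sending $\zeta^{\mathfrak{g}}_j = \sum_i U_{ij}e_i$ to $\zeta_j = \sum_i c_{ij}e_i$.

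Next, I would invoke the algebra description of Kitt
$$\Kitt^{\mathfrak{g}}(s,\ff) = \langle \Gamma^{\mathfrak{g}}_\bullet\cdot Z^{\mathfrak{g}}_\bullet\rangle_r, \qquad \Kitt(\fa,I) = \langle \Gamma_\bullet\cdot Z_\bullet\rangle_r.$$
Because $\pi$ is a ring homomorphism carrying the generators of $\Gamma^{\mathfrak{g}}_\bullet$ to those of $\Gamma_\bullet$ and (by flatness) those of $Z^{\mathfrak{g}}_\bullet$ onto those of $Z_\bullet$, it maps the degree-$r$ component of the former sub-DG-algebra into that of the latter, yielding $\pi\bigl(\Kitt^{\mathfrak{g}}(s,\ff)\bigr)\subseteq \Kitt(\fa,I)$. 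For the reverse inclusion, any generator $\zeta_{j_1}\cdots\zeta_{j_k}\cdot z$ of $\Kitt(\fa,I)$ with $z\in Z_{r-k}(\ff;R)$ lifts through the inclusion $R\hookrightarrow S$ to an element $\zeta^{\mathfrak{g}}_{j_1}\cdots\zeta^{\mathfrak{g}}_{j_k}\cdot\tilde{z}\in \Kitt^{\mathfrak{g}}(s,\ff)$, where $\tilde{z}\in Z_{r-k}(\ff;S)$ is the image of $z$. Consequently $\pi\bigl(\Kitt^{\mathfrak{g}}(s,\ff)\bigr) = \Kitt(\fa,I)$, which under the identification $S/(\mathbf{x})S\cong R$ is exactly the asserted equality $\bigl(\Kitt^{\mathfrak{g}}(s,\ff)+(\mathbf{x})\bigr)/(\mathbf{x}) = \Kitt(\fa,I)$.

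I do not anticipate a serious obstacle here: the argument is essentially a tracking of generators under the substitution $U_{ij}\mapsto c_{ij}$. The only point that deserves care is verifying that passage to Koszul cycles commutes with this specialization---a priori a cycle relation over $S$ might collapse upon reduction---but this follows cleanly from $R$-flatness of $S/R$ together with the regularity of the sequence $\mathbf{x}$, giving an isomorphism $Z_k(\ff;S)/\mathbf{x}Z_k(\ff;S)\cong Z_k(\ff;R)$ rather than merely a surjection.
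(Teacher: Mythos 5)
Your proof is correct and takes essentially the same approach as the paper: both arguments use the degree-$r$ algebra description $\Kitt = \langle \Gamma_\bullet \cdot Z_\bullet\rangle_r$ and track generators through the specialization $\sigma\colon S \to R$, $U_{ij}\mapsto c_{ij}$, showing each inclusion by lifting or pushing cycles. The only difference is that you make explicit the flat-base-change identification $Z_k(\ff;S)\cong Z_k(\ff;R)\otimes_R S$ to justify why the cycle module behaves well under the substitution, a point the paper leaves implicit.
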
\vspace{0.3cm}
\noindent\textit{Proof:} Define the $R$-algebra homomorphism $\sigma: S \longrightarrow R$ with $\sigma(U_{ij}) = c_{ij}$. Notice that $(\mathbf{x}) \subseteq \ker(\sigma)$. Thus $\sigma$ factors in $\phi:S/(\mathbf{x}) \longrightarrow R$ such that the diagram below
$$
\begin{tikzcd}
    S   \arrow{d}{\pi} \arrow{r}{\sigma}                       & R\\
    S/(\mathbf{x})  \arrow{ur}{\phi} & 
\end{tikzcd}
$$
is commutative. Recall from definition of Kitt that $\Kitt(\fa,I) = \langle Z_{\bullet}(\ff;R) \cdot \Gamma \rangle_r$, where $\Gamma = \langle \zeta_1,\dots,\zeta_s\rangle \subseteq K_1(\ff;R)$ and $\zeta_j = \sum_{k=1}^rc_{kj}e_j$. 

Let $z = \rho e_1\wedge \cdots \wedge e_r \in \Kitt(\mathfrak{a},I)$ with $\rho\in R$. Then there are $n \in \mathbb{N}$, $L_{1_j} = \{i_{1_j},\dots,i_{k_j}\} \subseteq \{1,\dots,s\}$ with $|L_{1_j}| = k_j$ and $L_{2_j}$ such that $|L_2| = r-k_j$ such that
\begin{gather*}
    \rho e_1\wedge \cdots \wedge e_r =\sum_{j=1}^n \zeta_{i_{1_j}}\wedge\cdots \wedge\zeta_{i_{k_j}}\wedge z_{L_2,j},
\end{gather*}
where $z_{L_2,j} \in Z_{r-k_j}(\ff;R)$ for all $1\leq j \leq n$. Thus, denoting $\zeta_{i}^{\mathfrak{g}} = \sum_{k=1}^rf_kU_{ki}$ for $1\leq i\leq s$ and taking 
\begin{gather*}
    Z = \sum_{j=1}^n \zeta_{i_{1_j}}^{\mathfrak{g}}\wedge\cdots \wedge\zeta_{i_{k_j}}^{\mathfrak{g}}\wedge z_{L_2,j} \in \Kitt^{\mathfrak{g}}(s,\ff),
\end{gather*}
we get $\phi(\overline{Z}) = z$. Thus 
\begin{gather*}
    \Kitt(\mathfrak{a},I) \subseteq \phi\bigg(\frac{\Kitt^{\mathfrak{g}}(s,\ff) + (\mathbf{x})}{\mathbf{x}}\bigg).
\end{gather*}
Conversely, given $Z \in \Kitt^{\mathfrak{g}}(s,\ff)$, there are $n \in \mathbb{N}$, $L_{1_j} = \{i_{1_j},\dots,i_{k_j}\} \subseteq \{1,\dots,s\}$ with $|L_{1_j}| = k_j$ and $L_{2_j}$ such that $|L_2| = r-k_j$ such that
\begin{gather*}
    Z=\sum_{j=1}^n \zeta^{\mathfrak{g}}_{i_{1_j}}\wedge\cdots \wedge\zeta^{\mathfrak{g}}_{i_{k_j}}\wedge z_{L_2,j},
\end{gather*}
where  $z_{L_2,j} \in Z_{r-k_j}(\ff;S)$ for all $1 \leq j \leq n$. Now applying the map $\phi$ to $\overline{Z}$, one gets
\begin{gather*}
    \phi(z) = \sum_{j=1}^n \zeta_{i_{1_j}}\wedge\cdots \wedge\zeta_{i_{k_j}}\wedge z_{L_2,j}  \in \Kitt(\mathfrak{a},I)
\end{gather*}
and the statement follows.
\qed
\begin{Remark}\label{remarkiso} {\normalfont
    Observe that $\phi$ in \autoref{Prop3.1} is a ring isomorphism. Indeed, it is clear that $\phi$ is surjective. Consider $\overline{f(U)} \in \Ker(\phi)$. Using the division algorithm, one concludes that there is $g(U_{11},\dots,U_{rs}) \in (\mathbf{x})$ such that \begin{gather*}
    f(U_{11},\dots,U_{rs}) = g(U_{11},\dots,U_{rs}) + f(c_{11},\dots,c_{rs}).
\end{gather*} 
As $\overline{f(U)} \in \Ker(\phi)$, then $f(c_{11},\dots,c_{rs}) = \phi(\overline{f(U)}) = 0$, which implies that $f(U_{11},\dots,U_{rs}) = g(U_{11},\dots,U_{rs}) \in (\mathbf{x})$ and so $\overline{f(U)} = 0$. Hence $\phi$ is an isomorphism.}
\end{Remark}


\subsection{Deformation of Kitt}
In this subsection, we investigate conditions under which the ordinary Kitt deforms to the generic one. To begin, we recall the definition of deformation.
\begin{Definition}
    Let $(R,I)$ and $(S,J)$ be pairs, where $R$ and $S$ are rings, and $I\subset R$, $J \subset S$ are ideals with possibility of  $I = R$ or $J = S$. One says that $(S,J)$ is a deformation of $(R,I)$ if there exists a sequence $\mathbf{x} \subset S$, which is regular over $S$ and $S/J$ such that there exists a ring isomorphism $\phi:\:S/(\mathbf{x}) \longrightarrow R$, with $\phi\big((J + (\mathbf{x}))/(\mathbf{x})\big) = I$. 
\end{Definition}
When the rings $R$ and $S$ are understood, we simply say that the ideal $I$ deforms to $J$. With \autoref{Prop3.1} and its remark in mind, $(R,\Kitt(\fa,I))$ deforms to $(S,\Kitt^{\fg}(s,\ff))$ if and only if the sequence $\mathbf{x}$ is $S/\Kitt^{\fg}(s,\ff)$-regular. In the following theorem, we derive an exact sequence that will enable us to find conditions ensuring this property.  
\begin{Theorem}
\label{Prop1}
Considering the notation in the previous subsection, there exists the following exact sequence\\
\[\xymatrix{%
H_2(\mathcal{Z}^+_{\bullet}(\fa,I))  \ar@{->}[r] &      H_{2}(\mathbf{x};S/\Kitt^{\mathfrak{g}}(s,\ff)) \ar@{->}[r]^{d^{0,2}_2} &     H_{1}(\mathcal{Z}^{+\mathfrak{g}}_{\bullet}(s,\ff))\otimes_S S/(\mathbf{x}) \ar@{->}[ld] &   \\
& H_1(\mathcal{Z}^{+}_{\bullet}(\fa,I)) \ar@{->}[r] & H_1(\mathbf{x};S/\Kitt^{\mathfrak{g}}(s,\ff)) \ar@{->}[r] & 0.
}\]
\end{Theorem}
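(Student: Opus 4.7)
\emph{Proof plan.} The strategy is a double-complex spectral sequence comparison. Form the double complex $\mathcal{E}_{p,q} := \mathcal{Z}^{+\mathfrak{g}}_q(s,\ff) \otimes_S K_p(\mathbf{x}; S)$, where $K_\bullet(\mathbf{x}; S)$ is the Koszul complex on the sequence $\mathbf{x} = U_{11} - c_{11},\ldots, U_{rs} - c_{rs}$ in $S$. Its total complex admits two convergent filtration spectral sequences, whose comparison will produce the asserted five-term exact sequence.

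Two auxiliary facts drive the argument. First, $\mathbf{x}$ must be regular on every module of $\mathcal{Z}^{+\mathfrak{g}}_\bullet(s,\ff)$. These modules are built from Koszul cycles $Z_j(\ff; S) = Z_j(\ff; R) \otimes_R S$ (equality by flatness of $S = R[U]$ over $R$) and from local cohomology $H^r_\mathbf{t}$ with respect to the disjoint variable set $\mathbf{t} = (T_1,\ldots,T_r)$; in each case $\mathbf{x}$ acts as a variable-translation sequence on a polynomial extension and remains regular. Second, a complex-level refinement of \autoref{Prop3.1} should give
$$\mathcal{Z}^{+\mathfrak{g}}_\bullet(s,\ff) \otimes_S S/(\mathbf{x}) \;\cong\; \mathcal{Z}^+_\bullet(\fa, I),$$
since the specialization $U_{ij} \mapsto c_{ij}$ sends $\gamma_j^\fg = \sum_i U_{ij} T_i$ to $\gamma_j = \sum_i c_{ij} T_i$ and commutes with both Koszul cycles and local cohomology in $\mathbf{t}$ (the $U$- and $T$-variables being disjoint).

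Granted these, the spectral sequence that first takes Koszul homology along $\mathbf{x}$ collapses onto its $p=0$ column and yields
$$H_n(\Tot(\mathcal{E})) \;\cong\; H_n(\mathcal{Z}^+_\bullet(\fa, I)).$$
The other spectral sequence, which first takes homology of $\mathcal{Z}^{+\mathfrak{g}}_\bullet$, has
$$E^2_{p,q} = H_p(\mathbf{x}; H_q(\mathcal{Z}^{+\mathfrak{g}}_\bullet(s,\ff))).$$
Since $H_0(\mathcal{Z}^{+\mathfrak{g}}_\bullet(s,\ff)) = S/\Kitt^{\mathfrak{g}}(s,\ff)$ (the image of $\tau_0^\fg$), the standard homological five-term low-degree exact sequence
$$H_2(\Tot) \to E^2_{2,0} \to E^2_{0,1} \to H_1(\Tot) \to E^2_{1,0} \to 0$$
becomes, after identifying $E^2_{0,1} = H_1(\mathcal{Z}^{+\mathfrak{g}}_\bullet(s,\ff)) \otimes_S S/(\mathbf{x})$, precisely the claimed sequence.

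The main obstacle is the complex-level specialization in the second fact above: while each ingredient (Koszul complex, totalization, local cohomology in $\mathbf{t}$) behaves well under the specialization because the $U$- and $T$-variables are disjoint, a careful tracking of the differentials is needed to confirm that the entire complex, not merely its terms, specializes correctly. Regularity of $\mathbf{x}$ on the individual modules should then be a routine flat-base-change computation.
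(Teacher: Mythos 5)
Your proposal is correct and follows essentially the same route as the paper: form the double complex $\mathcal{Z}^{+\mathfrak{g}}_\bullet(s,\ff)\otimes_S K_\bullet(\mathbf{x};S)$, use that $\mathbf{x}$ is a regular sequence on each $\mathcal{Z}^{+\mathfrak{g}}_i$ (the paper records this via the identification $\mathcal{Z}^{+\mathfrak{g}}_i \cong \mathcal{Z}^+_i(\mathbf{a},\ff)[U_{ij}]$, which also disposes of your worry about the differentials specializing correctly) so that one spectral sequence collapses onto $\mathcal{Z}^+_\bullet(\fa,I)$, compute the second page of the other as $H_p(\mathbf{x};H_q(\mathcal{Z}^{+\mathfrak{g}}_\bullet))$ by flatness of the Koszul modules, and read off the five-term low-degree exact sequence. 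The only cosmetic difference is that the paper assembles the five-term sequence by hand from the two filtration short exact sequences, whereas you invoke it as a standard fact.
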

\begin{proof}
Let $S^{\mathfrak{g}} = S[T_1,\dots,T_r]$ be the polynomial extension of $S$ in $r$ indeterminates and $\gamma =\gamma_1,\dots,\gamma_s \in S^{\mathfrak{g}}$  such that
\begin{gather*}
    \begin{bmatrix}
\gamma_1 & \gamma_2 & \cdots  &\gamma_s 
\end{bmatrix} = \begin{bmatrix}
T_1 & T_2 & \cdots  & T_r 
\end{bmatrix}\begin{bmatrix}
U_{11} & U_{12}  &\cdots  & U_{1s} \\ 
U_{21} & U_{22} & \cdots & U_{2s} \\ 
\vdots & \vdots &\ddots  & \vdots \\ 
U_{r1} & U_{r2} &\cdots  & U_{rs} 
\end{bmatrix} = \begin{bmatrix}
T_1 & T_2 & \cdots  & T_r 
\end{bmatrix}\Phi^{\mathfrak{g}} 
\end{gather*}
Consider the complex $\mathcal{Z}^{+\mathfrak{g}}_{\bullet} := \mathcal{Z}^{+\mathfrak{g}}_{\bullet}(s,\ff)$ and $K_\bullet = K_{\bullet}(\mathbf{x};S)$ the Koszul complex of the sequence $\mathbf{x}$ in $S$. Consider $E^{\bullet,\bullet} = \mathcal{Z}^{+\mathfrak{g}}_{\bullet} \otimes_S K_{\bullet}$ the double complex obtained by tensor product of $\mathcal{Z}^{+\mathfrak{g}}_{\bullet}$ and $K_{\bullet}$. One displays this double complex in the second quadrant double complex as follows.
\[
\begin{tikzcd}
& 0 \arrow{d} & & 0 \arrow{d} & 0 \arrow{d} &\\
0 \arrow{r} & \mathcal{Z}^{+\mathfrak{g}}_{s}\otimes_S K_{rs} \arrow{r} \arrow{d} & \cdots \arrow{r} & \mathcal{Z}^{+\mathfrak{g}}_{1}\otimes_S K_{rs} \arrow{d} \arrow{r} & \mathcal{Z}^{+\mathfrak{g}}_{0}\otimes_S K_{rs} \arrow{r} \arrow{d} & 0 \\
& \vdots \arrow{d} & & \vdots \arrow{d} &\vdots \arrow{d} & \\
0 \arrow{r} & \mathcal{Z}^{+\mathfrak{g}}_{s}\otimes_S K_1 \arrow{r} \arrow{d} & \cdots \arrow{r} & \mathcal{Z}^{+\mathfrak{g}}_{1}\otimes_S K_1 \arrow{d} \arrow{r} & \mathcal{Z}^{+\mathfrak{g}}_{0}\otimes_S K_1 \arrow{r} \arrow{d} & 0 \\
  0 \arrow{r} & \mathcal{Z}^{+\mathfrak{g}}_{s}\otimes_S K_0 \arrow{r} \arrow{d} & \cdots \arrow{r} & \mathcal{Z}^{+\mathfrak{g}}_{1}\otimes_S K_0 \arrow{d} \arrow{r} & \mathcal{Z}^{+\mathfrak{g}}_{0}\otimes_S K_0 \arrow{r} \arrow{d} & 0 \\
  & 0 & & 0 & 0 & 
\end{tikzcd}
\]
Observe that $(-i)$-th column of this double complex is the Koszul complex of sequence $\mathbf{x}$ with coefficients in $\mathcal{Z}^{+\mathfrak{g}}_{i}$. Note that $\mathcal{Z}^{+\mathfrak{g}}_{i} = \mathcal{Z}^{+}_i(\mathbf{a},\ff)[U_{ij}]$. Hence $\mathbf{x}$ is regular on $\mathcal{Z}^{+\mathfrak{g}}_{i}$, which implies that
    $H_j(\mathbf{x};\mathcal{Z}^{+\mathfrak{g}}_{i}) = 0$
for all $1\leq j \leq rs$. Moreover $H_0(\mathbf{x};\mathcal{Z}^{+\mathfrak{g}}_{i}) = \mathcal{Z}^{+\mathfrak{g}}_{i}\otimes_S S/(\mathbf{x}) \cong \mathcal{Z}^{+}_{i}$ for all $0\leq i \leq s$. Thus the first page of vertical spectral ${}^1E^{\bullet,\bullet}_{\ver}$ gives ${}^1E^{-p,q}_{\ver}=0$ for all $p$ and all $q>0$ and
\[
\begin{tikzcd}
  {}^1E^{\bullet,0}_{\ver}=\quad 0 \arrow{r} & \mathcal{Z}^{+}_{s} \arrow{r}  & \cdots \arrow{r} & \mathcal{Z}^{+}_{1}  \arrow{r} & \mathcal{Z}^{+}_{0} \arrow{r}  & 0. 
\end{tikzcd}
\]
The second page of vertical spectral ${}^2E^{\bullet,\bullet}_{\ver}$ gives ${}^2E^{-p,q}_{\ver}=0$ for all $p$ and all $q>0$ and
\[
\begin{tikzcd}
{}^2E^{\bullet,0}_{\ver}=\quad
   H_s(\mathcal{Z}^{+}_{\bullet})   & \cdots  & H_{1}(\mathcal{Z}^{+}_{\bullet})  & H_0(\mathcal{Z}^{+}_{\bullet}).
\end{tikzcd}
\] 

Since the vertical spectral sequence collapses on page two, we have that ${}^2E^{\bullet,\bullet}_{\ver} = {}^\infty E^{\bullet,\bullet}_{\ver}$. Hence, by convergence theorem, we conclude that for all $i\geq 0$
\begin{gather*}
    H_{-i}(\Tot(E^{\bullet,\bullet})) = H_i(\mathcal{Z}^{+}_{\bullet}).
\end{gather*}
 Now we analyze the horizontal spectral sequence. Since $K_i$ is a flat 
$S$-module for all $0 \leq i \leq rs$, the first page of the horizontal 
spectral sequence ${}^1E^{\bullet,\bullet}_{\text{hor}}$ is given by
\[
\begin{tikzcd}
 0 \arrow{d} & & 0 \arrow{d} & 0 \arrow{d} \\
 H_{s}(\mathcal{Z}^{+\mathfrak{g}}_{\bullet})\otimes_S K_{rs}  \arrow{d} & \cdots  & H_{1}(\mathcal{Z}^{+\mathfrak{g}}_{\bullet})\otimes_S K_{rs} \arrow{d}  & H_{0}(\mathcal{Z}^{+\mathfrak{g}}_{\bullet})\otimes_S K_{rs} \arrow{d}  \\
 \vdots \arrow{d} & & \vdots \arrow{d} &\vdots \arrow{d}  \\
 H_{s}(\mathcal{Z}^{+\mathfrak{g}}_{\bullet})\otimes_S K_1 \arrow{d}   & \cdots  & H_{1}(\mathcal{Z}^{+\mathfrak{g}}_{\bullet})\otimes_S K_1 \arrow{d}  & H_{0}(\mathcal{Z}^{+\mathfrak{g}}_{\bullet})\otimes_S K_1  \arrow{d}  \\
 H_{s}(\mathcal{Z}^{+\mathfrak{g}}_{\bullet})\otimes_S K_0  \arrow{d} & \cdots & H_{1}(\mathcal{Z}^{+\mathfrak{g}}_{\bullet})\otimes_S K_0 \arrow{d} & H_{0}(\mathcal{Z}^{+\mathfrak{g}}_{\bullet})\otimes_S K_0  \arrow{d}  \\
 0 & & 0 & 0 
\end{tikzcd}
\]
Since the $(-i)$-column of ${}^1E^{\bullet,\bullet}_{\hor}$ is the Koszul complex of the sequence $\mathbf{x}$ with coefficients in $H_i(\mathcal{Z}^{+\mathfrak{g}}_{\bullet})$, the second page of horizontal spectral ${}^2E^{\bullet,\bullet}_{\hor}$ gives
\[
\begin{tikzcd}
 H_{rs}(\mathbf{x};H_{s}(\mathcal{Z}^{+\mathfrak{g}}_{\bullet}))   & \cdots  & H_{rs}(\mathbf{x};H_{1}(\mathcal{Z}^{+\mathfrak{g}}_{\bullet}))  & H_{rs}(\mathbf{x};H_{0}(\mathcal{Z}^{+\mathfrak{g}}_{\bullet}))   &  \\
 \vdots  & & \vdots  &\vdots  & \\
 H_{1}(\mathbf{x};H_{s}(\mathcal{Z}^{+\mathfrak{g}}_{\bullet}))   & \cdots & H_{1}(\mathbf{x};H_{1}(\mathcal{Z}^{+\mathfrak{g}}_{\bullet}))  &  H_{1}(\mathbf{x};H_{0}(\mathcal{Z}^{+\mathfrak{g}}_{\bullet}))  & \\
 H_{0}(\mathbf{x};H_{s}(\mathcal{Z}^{+\mathfrak{g}}_{\bullet}))   & \cdots  & H_{0}(\mathbf{x};H_{1}(\mathcal{Z}^{+\mathfrak{g}}_{\bullet}))  & H_{0}(\mathbf{x};H_{0}(\mathcal{Z}^{+\mathfrak{g}}_{\bullet}))  & 
\end{tikzcd}
\]
Note that \begin{gather*}
    H_{j}(\mathbf{x};H_{0}(\mathcal{Z}^{+\mathfrak{g}}_{\bullet})) = H_{j}(\mathbf{x};S/\Kitt^{\mathfrak{g}}(s,\ff))\:\:\:\textrm{for all $0 \leq j \leq rs$};\\
    H_{0}(\mathbf{x};H_{i}(\mathcal{Z}^{+\mathfrak{g}}_{\bullet})) = H_{i}(\mathcal{Z}^{+\mathfrak{g}}_{\bullet})\otimes_S S/(\mathbf{x}) \:\:\:\textrm{for all $0 \leq i \leq s$}.
\end{gather*} 
Moreover, observe that the differential $d_2$ has bidegree $(-1,-2)$. This means that $${}^2E^{0,0}_{\hor} = {}^\infty E^{0,0}_{\hor},\:\:\:\:\:\:\:\:\:\:\:\:\:\:\:\:\:\:{}^2E^{0,1}_{\hor} = {}^\infty E^{0,1}_{\hor},\:\:\:\:\:\:\:\:\:\:\:\: \:\:\:\:\:\: {}^\infty E^{-1,0}_{\hor} = \coker(d_2^{0,2}). $$  
Thus we can construct the following exact sequence
\[
\begin{tikzcd}
    0\arrow{r}&\ker(d^{0,2}_2) \arrow{r} &H_{2}(\mathbf{x};S/\Kitt^{\mathfrak{g}}(s,\ff)) \arrow{r}{d^{0,2}_2} & H_{1}(\mathcal{Z}^{+\mathfrak{g}}_{\bullet})\otimes_S S/(\mathbf{x}) \arrow{r} & \coker(d_2^{0,2}) \arrow{r} & 0.
\end{tikzcd}
\]
Observe that $\ker(d^{0,2}_2) \cong {}^3E^{0,2}_{\hor} = {}^\infty E^{0,2}_{\hor}$. By convergence theorem, there is a filtration of $H_2(\Tot(E^{\bullet,\bullet}))$
\begin{gather*}
    0 \subseteq \mathcal{F}_0 \subseteq \mathcal{F}_1 \subseteq \mathcal{F}_2 = H_{-2}(\Tot(E^{\bullet,\bullet})) 
\end{gather*}
such that
\begin{gather*}
    \mathcal{F}_2/\mathcal{F}_1 = H_{-2}(\Tot(E^{\bullet,\bullet}))/\mathcal{F}_1 = {}^\infty E^{0,2}_{\hor} = \ker(d^{0,2}_2),
\end{gather*}
which implies that there is the following exact sequence
\[
\begin{tikzcd}
    0 \arrow{r} &\mathcal{F}_1\arrow{r}&H_{-2}(\Tot(E^{\bullet,\bullet})) \arrow{r}& \ker(d^{0,2}_2) \arrow{r} & 0.
\end{tikzcd}
\]
Since $H_{-2}(\Tot(E^{\bullet,\bullet})) \cong H_2(\mathcal{Z}^+_{\bullet})$, splicing the two sequences above, we obtain
\[
\begin{tikzcd}
    H_2(\mathcal{Z}^+_{\bullet}) \arrow{r}& H_{2}(\mathbf{x};S/\Kitt^{\mathfrak{g}}(s,\ff)) \arrow{r}{d^{0,2}_2} & H_{1}(\mathcal{Z}^{+\mathfrak{g}}_{\bullet})\otimes_S S/(\mathbf{x}) \arrow{r} & \coker(d_2^{0,2}) \arrow{r} & 0.
\end{tikzcd}
\]
As ${}^\infty E^{-1,0}_{\hor} = \coker(d_2^{0,2})$ and ${}^\infty E^{0,1}_{\hor} = H_1(\mathbf{x};S/\Kitt^{\mathfrak{g}}(s,\ff))$, the convergence theorem tells us that there exists a filtration of $H_{-1}(\Tot(E^{\bullet,\bullet}))$
\begin{gather*}
    0 \subseteq \mathcal{F}_0 \subseteq \mathcal{F}_1 = H_{-1}(\Tot(E^{\bullet,\bullet})) 
\end{gather*}
such that
\begin{gather*}
    \mathcal{F}_0 =  {}^\infty E^{-1,0}_{\hor} =\coker(d_2^{0,2})\:\:\:\:\:\:\:\:\textrm{and} \:\:\:\:\:\:\:\: \frac{H_{-1}(\Tot(E^{\bullet,\bullet}))}{\coker(d_2^{0,2})} =  {}^\infty E^{0,1}_{\hor} = H_1(\mathbf{x};S/\Kitt^{\mathfrak{g}}(s,\ff)). 
\end{gather*}
Thus we also can construct the following exact sequence
\[
\begin{tikzcd}
  0\arrow{r} & \coker(d^{0,2}_2) \arrow{r} &  H_{-1}(\Tot(E^{\bullet,\bullet})) \arrow{r} &  H_1(\mathbf{x};S/\Kitt^{\mathfrak{g}}(s,\ff))  \arrow{r} & 0.
\end{tikzcd}
\]
Finally since $H_{-1}(\Tot(E^{\bullet,\bullet})) = H_1(\mathcal{Z}^+_{\bullet})$, splicing the two sequences above, we get\\[0.3cm]
\xymatrix{%
H_2(\mathcal{Z}^+_{\bullet})  \ar@{->}[r] & H_{2}(\mathbf{x};S/\Kitt^{\mathfrak{g}}(s,\ff)) \ar@{->}[r]^{d^{0,2}_2} & H_{1}(\mathcal{Z}^{+\mathfrak{g}}_{\bullet})\otimes_S S/(\mathbf{x}) \ar@{->}[r] &  H_1(\mathcal{Z}^{+}_{\bullet}) \ar@{->}[dl] \\
&& H_1(\mathbf{x};S/\Kitt^{\mathfrak{g}}(s,\ff)) \ar@{->}[r] & 0.
}
\end{proof}

As a special case of the above theorem we have 
\begin{Corollary}
\label{cor1}
If $\mathcal{Z}^{+\mathfrak{g}}_{\bullet}(s,\ff)$ is an acyclic complex, then for all $0 \leq i \leq rs$,
 $$H_{i}(\mathcal{Z}^{+}_{\bullet}(\mathbf{a},\ff))  = H_i(\mathbf{x}; S/\Kitt^{\mathfrak{g}}(s,\ff)).$$ 
 \end{Corollary}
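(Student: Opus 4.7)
The plan is to reuse the double complex and spectral-sequence machinery already built in the proof of \autoref{Prop1}, now exploiting the additional acyclicity hypothesis to force the horizontal spectral sequence to collapse. Concretely, I would start from the double complex $E^{\bullet,\bullet} = \mathcal{Z}^{+\mathfrak{g}}_{\bullet} \otimes_S K_{\bullet}(\mathbf{x};S)$ displayed in the proof of \autoref{Prop1} and again run the two induced spectral sequences converging to $H_\bullet(\Tot(E^{\bullet,\bullet}))$.

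For the vertical spectral sequence, nothing changes: since $\mathcal{Z}^{+\mathfrak{g}}_{i} = \mathcal{Z}^{+}_{i}(\mathbf{a},\ff)[U_{ij}]$ is a polynomial extension, the sequence $\mathbf{x} = U_{11}-c_{11},\ldots,U_{rs}-c_{rs}$ is regular on each $\mathcal{Z}^{+\mathfrak{g}}_{i}$, and moreover $H_{0}(\mathbf{x};\mathcal{Z}^{+\mathfrak{g}}_{i}) \cong \mathcal{Z}^{+}_{i}$. As in \autoref{Prop1} this shows ${}^{2}E^{-p,q}_{\ver}=0$ for $q>0$ and identifies the surviving row with the complex $\mathcal{Z}^{+}_{\bullet}$, so the sequence collapses at page two and gives
\[
H_{-i}\!\left(\Tot(E^{\bullet,\bullet})\right) \;\cong\; H_{i}\!\left(\mathcal{Z}^{+}_{\bullet}(\mathbf{a},\ff)\right) \qquad \text{for all } i\geq 0.
\]

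The horizontal spectral sequence is where the hypothesis is used. Taking horizontal homology first turns the $(-i)$-th column of the first page into the Koszul complex $K_{\bullet}(\mathbf{x};S) \otimes_S H_{i}(\mathcal{Z}^{+\mathfrak{g}}_{\bullet})$. Acyclicity of $\mathcal{Z}^{+\mathfrak{g}}_{\bullet}(s,\ff)$ gives $H_{i}(\mathcal{Z}^{+\mathfrak{g}}_{\bullet}) = 0$ for every $i\geq 1$, so all columns with $-i<0$ vanish and only the column $i=0$ survives, yielding on page two
\[
{}^{2}E^{0,q}_{\hor} \;=\; H_{q}\!\left(\mathbf{x}; H_{0}(\mathcal{Z}^{+\mathfrak{g}}_{\bullet})\right) \;=\; H_{q}\!\left(\mathbf{x}; S/\Kitt^{\mathfrak{g}}(s,\ff)\right), \qquad {}^{2}E^{-p,q}_{\hor}=0 \text{ for } p>0.
\]
Since this page is concentrated in a single column, all higher differentials vanish, the filtration has a single nonzero subquotient, and we conclude that
\[
H_{-i}\!\left(\Tot(E^{\bullet,\bullet})\right) \;\cong\; H_{i}\!\left(\mathbf{x}; S/\Kitt^{\mathfrak{g}}(s,\ff)\right).
\]

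Comparing the two computations of $H_{-i}(\Tot(E^{\bullet,\bullet}))$ yields the desired isomorphism for every $0\leq i \leq rs$. There is no real obstacle; the only thing to take care of is the bookkeeping of the bidegrees and the sign conventions so that the collapse argument aligns exactly with the conventions adopted in \autoref{Prop1}.
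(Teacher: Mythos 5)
Your argument is correct and follows the paper's own proof exactly: both collapse the horizontal spectral sequence of $E^{\bullet,\bullet}=\mathcal{Z}^{+\mathfrak{g}}_{\bullet}\otimes_S K_{\bullet}(\mathbf{x};S)$ to the single column $H_{\bullet}(\mathbf{x};S/\Kitt^{\mathfrak{g}}(s,\ff))$ using the acyclicity hypothesis, and then compare with the vertical spectral sequence computation $H_{-i}(\Tot(E^{\bullet,\bullet}))\cong H_i(\mathcal{Z}^{+}_{\bullet}(\mathbf{a},\ff))$ already established in the proof of \autoref{Prop1}.
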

\noindent\textit{Proof:} Indeed if $\mathcal{Z}^{+\mathfrak{g}}_{\bullet}$ is acyclic, then the second page of horizontal spectral of double complex $E^{\bullet,\bullet} = \mathcal{Z}^{+\mathfrak{g}}_{\bullet} \otimes_S K_{\bullet}$ collapses becoming
\[
\begin{tikzcd}
0   & \cdots  & 0  & H_{rs}(\mathbf{x};H_{0}(\mathcal{Z}^{+\mathfrak{g}}_{\bullet}))  \\ 
 \vdots  & & \vdots  &\vdots  \\
 0   & \cdots  & 0  & H_{0}(\mathbf{x};H_{0}(\mathcal{Z}^{+\mathfrak{g}}_{\bullet}))   \\
\end{tikzcd}
\]
Thus, by the convergence theorem, we get $H_{-i}(\Tot(E^{\bullet,\bullet})) \cong H_{i}(\mathbf{x};H_{0}(\mathcal{Z}^{+\mathfrak{g}}_{\bullet}))$ for all $0 \leq i \leq rs$. Since $H_{0}(\mathcal{Z}^{+\mathfrak{g}}_{\bullet}) = S/\Kitt^{\mathfrak{g}}(s,\ff)$, comparing with the vertical spectral sequence, we conclude 
\begin{gather*}
    H_{i}(\mathbf{x};S/\Kitt^{\mathfrak{g}}(s,\ff)) \cong H_i(\mathcal{Z}^{+}_{\bullet})
\end{gather*}
for all $0 \leq i \leq rs$
\qed\\[0.3cm]
\noindent In particular, if $R$ is a Noetherian ring and $\mathcal{Z}^{+\mathfrak{g}}_{\bullet}(s,\ff)$ is an acyclic complex, \autoref{cor1} says that the complex $\mathcal{Z}^{+}_{\bullet}$ is rigid. 
\begin{Corollary}
\label{cor4.3}
    If $H_1(\mathcal{Z}^{+\mathfrak{g}}_{\bullet}(s,\ff)) =0$, then $H_1(\mathcal{Z}^{+}_{\bullet}(\mathbf{a},\ff)) \cong H_{1}(\mathbf{x};S/\Kitt^{\mathfrak{g}}(s,\ff)).$ Moreover, if $R$ is a Noetherian ring, the sequence $\mathbf{x}=U_{11} - c_{11},\dots, U_{rs} - c_{rs}$ is regular over $S/\Kitt^{\mathfrak{g}}(s,\ff)$ if and only if $H_1(\mathcal{Z}^+_{\bullet}(\mathbf{a},\ff)) = 0$.
\end{Corollary}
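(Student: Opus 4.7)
The plan is to plug the hypothesis directly into the five-term exact sequence of \autoref{Prop1}. Setting $H_1(\mathcal{Z}^{+\mathfrak{g}}_\bullet(s,\ff)) = 0$ kills its middle term $H_1(\mathcal{Z}^{+\mathfrak{g}}_\bullet)\otimes_S S/(\mathbf{x})$, and the right-hand portion of the sequence collapses to the short exact sequence
\[
0 \longrightarrow H_1(\mathcal{Z}^+_\bullet(\mathbf{a},\ff)) \longrightarrow H_1(\mathbf{x};\, S/\Kitt^{\mathfrak{g}}(s,\ff)) \longrightarrow 0,
\]
yielding the asserted isomorphism.

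For the ``moreover'' clause, I argue via this isomorphism. The forward direction is the standard fact that a regular sequence annihilates all positive Koszul homologies of its module: if $\mathbf{x}$ is $S/\Kitt^{\mathfrak{g}}(s,\ff)$-regular, then $H_1(\mathbf{x};\, S/\Kitt^{\mathfrak{g}}(s,\ff)) = 0$, and the isomorphism transports this vanishing to $H_1(\mathcal{Z}^+_\bullet(\mathbf{a},\ff)) = 0$. The substantive direction is the converse: $H_1(\mathcal{Z}^+_\bullet(\mathbf{a},\ff)) = 0$ gives $H_1(\mathbf{x};\, S/\Kitt^{\mathfrak{g}}(s,\ff)) = 0$ via the isomorphism, and from this one must deduce that $\mathbf{x}$ is regular on $S/\Kitt^{\mathfrak{g}}(s,\ff)$.

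The hard part will be precisely this last inference, because the classical equivalence between the vanishing of the first Koszul homology and the regularity of a sequence is naturally stated in the Noetherian local setting, while $S = R[U_{ij}]$ is not local. The Noetherian hypothesis on $R$ makes $S$ Noetherian and $S/\Kitt^{\mathfrak{g}}$ a finitely generated $S$-module; moreover $\mathbf{x}$ is an $S$-regular sequence with $S/(\mathbf{x}) \cong R$ by \autoref{remarkiso}. My plan is to reduce to the local criterion by localizing at every maximal ideal $\fm$ of $S$ containing $(\mathbf{x}) + \Kitt^{\mathfrak{g}}$: there $\mathbf{x}\subseteq \fm S_{\fm}$ and, by flatness of localization,
\[
H_1\bigl(\mathbf{x};\,(S/\Kitt^{\mathfrak{g}})_{\fm}\bigr) \;=\; H_1\bigl(\mathbf{x};\, S/\Kitt^{\mathfrak{g}}\bigr)_{\fm} \;=\; 0.
\]
The Noetherian local Koszul criterion (e.g.\ \cite[Theorem 1.6.17]{bruns1998cohen}) then makes $\mathbf{x}$ regular on the localization $(S/\Kitt^{\mathfrak{g}})_{\fm}$. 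Since the regular-sequence property of $\mathbf{x}$ on a finitely generated module is detected locally along $\Supp(S/\Kitt^{\mathfrak{g}})\cap \V(\mathbf{x})$, this promotes to global regularity of $\mathbf{x}$ on $S/\Kitt^{\mathfrak{g}}(s,\ff)$, completing the argument.
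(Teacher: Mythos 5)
Your derivation of the isomorphism $H_1(\mathcal{Z}^{+}_{\bullet}) \cong H_1(\mathbf{x};S/\Kitt^{\mathfrak{g}})$ from \autoref{Prop1} is correct and matches the intended route: killing the term $H_1(\mathcal{Z}^{+\mathfrak{g}}_\bullet)\otimes_S S/(\mathbf{x})$ collapses the tail of the five-term sequence to the asserted isomorphism, and the forward direction of the ``moreover'' clause is the standard vanishing of positive Koszul homology for a regular sequence. The paper itself offers no written proof of this corollary, leaning instead on the phrase ``Koszul rigidity'' that appears in the proof of \autoref{prop2}; so the comparison is really about whether your supplied justification of the converse direction is sound.

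It is not. The claim that regularity of $\mathbf{x}$ on $M=S/\Kitt^{\mathfrak{g}}$ ``is detected locally along $\Supp(M)\cap \V(\mathbf{x})$'' is false in general, and that is precisely where your argument breaks. Consider $S=k[x,y]$, $M=S\oplus S/(x,y-1)$, and $\mathbf{x}=(x,y)$. One checks directly that $H_1(\mathbf{x};M)=0$ (the Koszul complex on $(0,1)$ over $k\cong S/(x,y-1)$ is acyclic in positive degrees) and $(\mathbf{x})M\neq M$, yet $x$ is a zerodivisor on $M$ since $x\cdot(0,\bar 1)=0$. Meanwhile $\Ann(M)+(\mathbf{x})=(x,y)$, the only maximal ideal over it is $(x,y)$, and $M_{(x,y)}=S_{(x,y)}$ because $(x,y-1)\not\subseteq(x,y)$; there $\mathbf{x}$ is regular. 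So the local check you propose passes while global regularity fails. The root of the problem is that an associated prime $\fp$ of $M$ (or of $M/(x_1,\dots,x_{i-1})M$) containing $x_i$ need not extend to a maximal ideal containing the whole sequence $\mathbf{x}$ — in the example $\fp=(x,y-1)$ satisfies $\fp+(\mathbf{x})=S$ — and those primes are invisible to your localization. To close this gap one has to exploit more than ``$H_1$ vanishes plus Noetherian'': one needs either the local or graded Nakayama-type hypothesis that makes $H_1(\mathbf{x};M)=0$ equivalent to regularity, or a structural argument specific to $S/\Kitt^{\mathfrak{g}}(s,\ff)$ and to $\mathbf{x}=U_{ij}-c_{ij}$ (for instance exploiting that $\Kitt^{\mathfrak{g}}$ is graded in the $U$'s, so its associated primes are graded, and ruling out the bad configuration above). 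As written, your reduction to the local Koszul criterion does not establish the converse.
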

\begin{Proposition}
\label{prop2}
Suppose that $R$ is a Noetherian ring. If $\mathcal{Z}^{+\mathfrak{g}}_{\bullet}(s,\ff)$ is acyclic and $\mathbf{x}=\: U_{11} - c_{11},\dots, U_{rs} - c_{rs}$ is regular on $S/\Kitt^{\mathfrak{g}}(s,\ff)$, then $\mathcal{Z}^+_{\bullet}(\mathbf{a},\ff)$ is acyclic. The converse holds if $(R,\mathfrak{m})$ is Noetherian local and $\mathfrak{a} \subseteq \mathfrak{m}I$.
\end{Proposition}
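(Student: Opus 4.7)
For the forward direction, acyclicity of $\mathcal{Z}^{+\fg}_{\bullet}(s,\ff)$ lets me invoke Corollary \ref{cor1} to identify $H_i(\mathcal{Z}^+_\bullet(\mathbf{a},\ff))\cong H_i(\mathbf{x};S/\Kitt^\fg(s,\ff))$ for every $i\geq 0$. The hypothesis that $\mathbf{x}$ is a regular sequence on $S/\Kitt^\fg(s,\ff)$ makes these Koszul homologies vanish in positive degrees, so $\mathcal{Z}^+_\bullet(\mathbf{a},\ff)$ is acyclic.

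For the converse, assume $(R,\fm)$ is Noetherian local with $\fa\subseteq\fm I$ and $\mathcal{Z}^+_\bullet(\mathbf{a},\ff)$ acyclic. Since $\fa\subseteq\fm I$ puts every $c_{ij}$ inside $\fm$, the sequence $\mathbf{x}$ lies in the maximal ideal $\mathfrak{M}=(\fm,U_{11},\ldots,U_{rs})$ of $S=R[U_{ij}]$. The change of variables $V_{ij}=U_{ij}-c_{ij}$ presents $S$ as the polynomial ring $R[V_{ij}]$ in which $\mathbf{x}$ is the system of variables; combined with the identification $\mathcal{Z}^{+\fg}_i\cong \mathcal{Z}^+_i[U_{ij}]$ noted in the proof of Proposition \ref{Prop1}, this shows that $\mathbf{x}$ is regular on every term of $\mathcal{Z}^{+\fg}_\bullet$ with quotient complex $\mathcal{Z}^{+\fg}_\bullet/(\mathbf{x})\mathcal{Z}^{+\fg}_\bullet\cong \mathcal{Z}^+_\bullet(\mathbf{a},\ff)$. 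I then invoke the standard homological principle that over a Noetherian local ring, a bounded complex of finitely generated modules whose terms are $\mathbf{x}$-regular for a sequence $\mathbf{x}$ contained in the maximal ideal is acyclic whenever its quotient by $(\mathbf{x})$ is. This follows by induction on $|\mathbf{x}|$: for a single element $x$, the short exact sequence $0\to C_\bullet\xrightarrow{x}C_\bullet\to C_\bullet/(x)C_\bullet\to 0$ forces $x$ to act surjectively on each $H_i(C_\bullet)$ with $i\geq 1$, and Nakayama finishes the job. Applied to $(\mathcal{Z}^{+\fg}_\bullet)_\mathfrak{M}$ over $S_\mathfrak{M}$, this yields local acyclicity at $\mathfrak{M}$.

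To promote this local acyclicity to global acyclicity, I would exploit the natural $\mathbb{N}$-grading on $S=R[U_{ij}]$ (with $R$ in degree zero and each $U_{ij}$ in degree one): with respect to this grading $\Kitt^\fg(s,\ff)$ is homogeneous and each $\mathcal{Z}^{+\fg}_i$ is a finitely generated graded $S$-module whose homogeneous pieces are finitely generated over $R$. Graded Nakayama then propagates the vanishing $H_i(\mathcal{Z}^{+\fg}_\bullet)_\mathfrak{M}=0$ to $H_i(\mathcal{Z}^{+\fg}_\bullet)=0$ globally, so $\mathcal{Z}^{+\fg}_\bullet$ is acyclic. Finally, with $H_1(\mathcal{Z}^{+\fg}_\bullet)=0$ in hand, Corollary \ref{cor4.3} converts the hypothesis $H_1(\mathcal{Z}^+_\bullet(\mathbf{a},\ff))=0$ into the regularity of $\mathbf{x}$ over $S/\Kitt^\fg(s,\ff)$. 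The main subtlety is this local-to-global step: one must carefully justify the compatibility of gradings on the Kitt ideal, the terms of $\mathcal{Z}^{+\fg}_\bullet$, and the resulting homology modules, so that graded Nakayama applies uniformly.
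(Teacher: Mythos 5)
Your forward direction coincides with the paper's: acyclicity of $\mathcal{Z}^{+\mathfrak{g}}_{\bullet}$ plus \autoref{cor1} plus regularity of $\mathbf{x}$ gives the vanishing of all positive Koszul homology, hence acyclicity of $\mathcal{Z}^+_\bullet$.

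For the converse, you take a genuinely different route. The paper runs the horizontal spectral sequence of $\mathcal{Z}^{+\mathfrak{g}}_{\bullet}\otimes_S K_\bullet(\mathbf{x};S)$ and shows first that ${}^2E^{0,1}_{\hor}=H_1(\mathbf{x};S/\Kitt^{\mathfrak{g}}(s,\ff))=0$, from which Koszul rigidity gives the regularity of $\mathbf{x}$ with no local hypothesis on $R$, and only afterwards does it induct on $n$ to kill $H_n(\mathcal{Z}^{+\mathfrak{g}}_{\bullet})$, using graded Nakayama at each step. You instead first prove acyclicity of $\mathcal{Z}^{+\mathfrak{g}}_{\bullet}$ directly via the short exact sequence $0\to C_\bullet\xrightarrow{x}C_\bullet\to C_\bullet/(x)C_\bullet\to 0$ iterated over the elements of $\mathbf{x}$, and only then recover regularity of $\mathbf{x}$ from \autoref{cor4.3}. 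Your iterated-SES argument is more elementary than the spectral-sequence computation and the induction you sketch is sound: each term $\mathcal{Z}^{+\mathfrak{g}}_i\cong\mathcal{Z}^+_i\otimes_R S$ is a finitely generated $S$-module on which $\mathbf{x}$ is regular, the quotient complex is $\mathcal{Z}^+_\bullet(\mathbf{a},\ff)$, and the long exact sequence forces $xH_i=H_i$ for $i\geq 1$, so Nakayama kills the homology. Two remarks. First, the localize-then-globalize detour through $S_{\mathfrak{M}}$ is unnecessary: each $U_{ij}-c_{ij}$ already lies in the unique ${}^*$maximal ideal $\mathfrak{M}=(\fm,U_{ij})$ because $c_{ij}\in\fm$, and $H_i(\mathcal{Z}^{+\mathfrak{g}}_\bullet)$ is a finitely generated graded $S$-module, so graded Nakayama applies directly to $xH_i=H_i$ without ever leaving $S$; the fact you need for the local-to-global step (that $M_{\mathfrak{M}}=0$ forces $M=0$ for finitely generated graded $M$ over a ${}^*$local ring) is true but is not literally graded Nakayama, so working gradedly from the start is cleaner. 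Second, because you establish acyclicity first and regularity of $\mathbf{x}$ last, your argument — unlike the paper's — uses the local hypothesis and $\fa\subseteq\fm I$ for the regularity of $\mathbf{x}$, so it does not by itself recover the remark following the proposition that this particular conclusion needs no local restriction.
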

\noindent\textit{Proof:} Suppose that $Z^{+\mathfrak{g}}_{\bullet}$ is acyclic and $\mathbf{x}$ is regular on $S/\Kitt^{\mathfrak{g}}(s,\ff)$. By \autoref{cor1}, we have have $H_{i}(\mathcal{Z}^{+}_{\bullet}) \cong H_{i}(\mathbf{x};S/\Kitt^{\mathfrak{g}}(s, \ff)) = 0$ for all $i\geq 1$, which implies that $\mathcal{Z}^{+}_{\bullet}$ is an acyclic complex. 

Conversely suppose the complex $\mathcal{Z}^+_{\bullet}$ is acyclic. Firstly note that ${}^2E^{p,q}_{\ver} = 0$ except if $(p,q) = (0,0)$, which implies that $H_{-i}(\Tot(E^{\bullet,\bullet})) = 0$ for all $i\geq 1$. Analyzing the horizontal spectral on infinite, we obtain
\[
\begin{tikzcd}
 \vdots & \vdots\\ 
* &   {}^2E^{0,1}_{\hor}  \\
{}^\infty E^{-1,0}_{\hor}  & H_0(\mathbf{x};H_0(\mathcal{Z}^{+\mathfrak{g}}_{\bullet}))
\end{tikzcd}
\] By convergence theorem, one knows that there exists a filtration of $H_{-1}(\Tot(E^{\bullet,\bullet}))$
\begin{gather*}
    0 = \mathcal{F}_0 \subseteq \mathcal{F}_1 = H_{-1}(\Tot(E^{\bullet,\bullet})) = 0
\end{gather*}
such that ${}^2E^{0,1}_{\hor} = \mathcal{F}_1/\mathcal{F}_0$. However, since $\mathcal{F}_1 = \mathcal{F}_2 = 0$, one has that ${}^2E^{0,1}_{\hor} = 0$. As $$H_1(\mathbf{x};S/\Kitt^{\fg}(s, \ff)) = {}^2E^{0,1}_{\hor}  = 0$$ and $R$ is a Noetherian ring, the Koszul rigidity tells us that $\mathbf{x}$ is a regular sequence on $S/\Kitt^{\mathfrak{g}}(s,\ff)$.

\noindent Now we will prove by induction on $i$ that $H_i(\mathcal{Z}^{+\mathfrak{g}}_{\bullet}) = 0$ for all $i\geq 1$. For $i=1$, observe that 
\begin{gather*}
    {}^\infty E^{-1,0}_{\hor} = \coker(d_2^{0,2}) = {}^2 E^{-1,0}_{\hor}
\end{gather*}
because ${}^2 E^{0,2}_{\hor} = 0$. Thus there exists a filtration of $H_{-1}(\Tot(E^{\bullet,\bullet}))$
\begin{gather*}
    0 = \mathcal{F}_0 \subseteq \mathcal{F}_1 = H_{-1}(\Tot(E^{\bullet,\bullet})) = H_1(\mathcal{Z}^+_{\bullet}) = 0
\end{gather*}
such that $  H_0(\mathbf{x};H_1(\mathcal{Z}^{+\mathfrak{g}}_{\bullet}))= {}^2E^{-1,0}_{\hor} = \mathcal{F}_1 = 0$. Since
\begin{gather*}
    H_1(\mathcal{Z}^{+\mathfrak{g}}_{\bullet}) / (\mathbf{x})H_1(\mathcal{Z}^{+\mathfrak{g}}_{\bullet}) = H_0(\mathbf{x};H_1(\mathcal{Z}^{+\mathfrak{g}}_{\bullet})) = 0,
\end{gather*}
we have that $H_1(\mathcal{Z}^{+\mathfrak{g}}_{\bullet}) = (\mathbf{x})H_1(\mathcal{Z}^{+\mathfrak{g}}_{\bullet})$. Note $H_1(\mathcal{Z}^{+\mathfrak{g}}_{\bullet})$ is a finitely generated graded $S$-module, since $S$ is Noetherian graded ring. Moreover, since $\mathfrak{a} \subseteq \mathfrak{m}I$, the elements $c_{ij} \in \mathfrak{m}$, which implies that the ${}^*$maximal ideal $(\mathfrak{m},U_{ij})$ contains $(\mathbf{x})$. Finally, as
\begin{gather*}
    H_1(\mathcal{Z}^{+\mathfrak{g}}_{\bullet})_{(\mathfrak{m},U_{ij})} = (\mathbf{x})H_1(\mathcal{Z}^{+\mathfrak{g}}_{\bullet})_{(\mathfrak{m},U_{ij})},
\end{gather*}
the graded Nakayama lemma says that $H_1(\mathcal{Z}^{+\mathfrak{g}}_{\bullet})_{(\mathfrak{m},U_{ij})} = 0$, which means that $H_1(\mathcal{Z}^{+\mathfrak{g}}_{\bullet}) = 0$. 

Suppose that we have proved that $H_i(\mathcal{Z}^{+\mathfrak{g}}_{\bullet}) = 0$ for all $ 1 \leq i < n\leq s$. Thus the second page of horizontal spectral sequence ${}^2E^{\bullet,\bullet}_{\hor}$ becomes
\[
\begin{tikzcd}
*   & *  & H_{rs}(\mathbf{x};H_{n}(\mathcal{Z}^{+\mathfrak{g}}_{\bullet}))& 0 &  \cdots & 0 & 0  \\
\vdots   & \vdots  & \vdots & \vdots &  \ddots &\vdots &\vdots  \\
 *   & *  & H_{1}(\mathbf{x};H_{n}(\mathcal{Z}^{+\mathfrak{g}}_{\bullet}))& 0 &  \cdots & 0 & 0  \\
 *   & *  &  H_n(\mathcal{Z}^{+\mathfrak{g}}_{\bullet})/(\mathbf{x})H_n(\mathcal{Z}^{+\mathfrak{g}}_{\bullet})  &  0 & \cdots & 0&H_0(\mathbf{x};H_0(\mathcal{Z}^{+\mathfrak{g}}_{\bullet}))  \\
\end{tikzcd}
\]
By convergence theorem, we conclude that $$ H_n(\mathcal{Z}^{+\mathfrak{g}}_{\bullet})/(\mathbf{x})H_n(\mathcal{Z}^{+\mathfrak{g}}_{\bullet}) = H_{-n}(\Tot(E^{\bullet,\bullet})) = H_{n}(\mathcal{Z}^{+}_{\bullet}) = 0.$$
Finally, proceeding as the case $i=1$, we conclude that $H_n(\mathcal{Z}^{+\mathfrak{g}}_{\bullet}) = 0$.
\qed
\begin{Remark}
Note that the local restriction on $R$ mentioned in the \autoref{prop2} is not  required for establishing that the sequence $\mathbf{x}$ is regular on $S/\Kitt^{\mathfrak{g}}(s,\ff)$.
\end{Remark}
\begin{Corollary}\label{deformg+1}
    Let $R$ be a Cohen-Macaulay local ring and $J = \mathfrak{a}:_R I$ an $s$-residual intersection of $I$.  Under any of the following conditions
    \begin{enumerate}[(i)]
        \item $s \leq \htt(I) + 1$;
        \item $R$ Gorenstein, $s = \htt(I) + 2$ and $I^{\unm}$ Cohen-Macaulay ideal.
    \end{enumerate}
     The sequence $\mathbf{x}= U_{11} - c_{11},\dots, U_{rs} - c_{rs}$ is regular on $S/\Kitt^{\mathfrak{g}}(s,\ff)$. In particular, $\Kitt(\mathfrak{a},I)$ deforms to $\Kitt^{\mathfrak{g}}(s,\ff)$.
\end{Corollary}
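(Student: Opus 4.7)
The plan is to apply \autoref{cor4.3}, which, assuming $H_1(\mathcal{Z}^{+\mathfrak{g}}_{\bullet}(s,\ff)) = 0$, identifies the regularity of $\mathbf{x}=U_{11}-c_{11},\dots,U_{rs}-c_{rs}$ on $S/\Kitt^{\mathfrak{g}}(s,\ff)$ with the vanishing $H_1(\mathcal{Z}^+_\bullet(\mathbf a,\ff))=0$. Both of these $H_1$-vanishings will follow from the acyclicity of the respective residual approximation complexes. Once regularity of $\mathbf{x}$ on $S/\Kitt^{\mathfrak{g}}(s,\ff)$ is established, \autoref{Prop3.1} together with \autoref{remarkiso} gives the ring isomorphism $S/(\mathbf{x}+\Kitt^{\mathfrak{g}}(s,\ff)) \cong R/\Kitt(\mathfrak a,I)$, and since $\mathbf{x}$ is tautologically regular on $S$ (being, up to a translation, a full system of polynomial indeterminates adjoined to $R$), the deformation statement then follows immediately from the definition.

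In case (i), $s \leq \htt(I)+1$, both $\mathcal{Z}^+_\bullet(\mathbf a,\ff)$ and its generic counterpart $\mathcal{Z}^{+\mathfrak{g}}_\bullet(s,\ff)$ are acyclic with no extra hypothesis on $I$: this is the foundational first-residual-intersection regime of \cite{hassanzadeh2012cohen}, which also underlies the identification $\Kitt(\mathfrak a,I)=J$ recorded in \autoref{Main}(viii). Hence $H_1$ vanishes for both complexes and \autoref{cor4.3} applies directly.

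Case (ii), with $R$ Gorenstein, $s=\htt(I)+2$, and $I^{\unm}$ Cohen--Macaulay, is the delicate one. The strategy is to reduce to the Cohen--Macaulay ideal $I^{\unm}$: since the embedded primary components of $I$ lie in codimension strictly greater than $s$ in the Gorenstein ring $R$, both the $s$-residual intersection $J=\mathfrak a:_RI$ and the Kitt ideal $\Kitt(\mathfrak a,I)$ are insensitive to replacing $I$ by $I^{\unm}$ (via a local application of \autoref{Main}(xi) at each prime of codimension at most $s$). With $I^{\unm}$ Cohen--Macaulay in a Gorenstein ring, the sliding-depth conditions on Koszul cycles needed for the second-residual-intersection case hold automatically, so \cite[Theorem 2.11]{hassanzadeh2012cohen} yields acyclicity of $\mathcal{Z}^+_\bullet(\mathbf a,\ff_{\unm})$; the same argument executed over $S=R[U_{ij}]$ gives acyclicity of $\mathcal{Z}^{+\mathfrak{g}}_\bullet(s,\ff_{\unm})$. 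The descent from $I^{\unm}$ back to $I$ then closes the argument via \autoref{cor4.3}. The main obstacle is precisely this descent at the level of residual approximation complexes: one must track with care that the Kitt identifications and the $H_1$-vanishings survive the passage between $I$ and $I^{\unm}$ in the generic polynomial extension, where the behavior of embedded components under adjoining the $U_{ij}$ requires scrutiny.
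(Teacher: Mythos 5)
Your overall strategy (acyclicity of the residual approximation complex $\Rightarrow$ regularity of $\mathbf{x}$ on $S/\Kitt^{\mathfrak{g}}(s,\ff)$ $\Rightarrow$ deformation via \autoref{Prop3.1} and \autoref{remarkiso}) matches the paper's. But the route you take has two genuine gaps.

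First, you plan to enter through \autoref{cor4.3}, which requires $H_1(\mathcal{Z}^{+\mathfrak{g}}_{\bullet}(s,\ff))=0$, and you assert that the \emph{generic} complex is acyclic ``with no extra hypothesis on $I$'' by invoking \cite{hassanzadeh2012cohen}. This is not justified: \cite[Corollary 2.9]{hassanzadeh2012cohen} is formulated for a Cohen--Macaulay \emph{local} ring with $J$ an honest $s$-residual intersection, whereas the generic complex lives over $S=R[U_{ij}]$, which is not local, and the generic residual $\R(s,\ff)=(\mathbf{a}'):_SIS$ is not known to have height $\geq s$ away from the primes containing $(\mathbf{x})$ (indeed Proposition~4.2 shows its height is bounded above by $s$, and the paper deliberately omits the $G_{s+1}$ hypothesis that would give the lower bound everywhere). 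The paper avoids this issue entirely: it cites \cite[Corollary~2.9(b)]{hassanzadeh2012cohen} only for acyclicity of the \emph{ordinary} complex $\mathcal{Z}^+_\bullet(\mathbf{a},\ff)$, and then applies the ``converse'' direction of \autoref{prop2} together with its remark. The relevant mechanism is the exact sequence in \autoref{Prop1}, which already yields a surjection $H_1(\mathcal{Z}^+_\bullet)\twoheadrightarrow H_1(\mathbf{x};S/\Kitt^{\mathfrak{g}}(s,\ff))$ with \emph{no} hypothesis on the generic $H_1$; thus acyclicity of the ordinary complex alone forces $H_1(\mathbf{x};S/\Kitt^{\mathfrak{g}}(s,\ff))=0$ and Koszul rigidity finishes. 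Your detour through the generic $H_1$ is therefore both unnecessary and, as justified, unproved.

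Second, for case (ii) your reduction to $I^{\unm}$ misapplies \autoref{Main}(xi): that item concerns passage to $R/\fb$ for an ideal $\fb$ with $I\cap\fb=0$, not the replacement of $I$ by its unmixed part $I^{\unm}\supseteq I$. There is no statement in the paper that $\Kitt(\fa,I)=\Kitt(\fa,I^{\unm})$, and indeed \autoref{Main}(v) only gives a containment $\Kitt(\fa,I)\subseteq\Kitt(\fa,I^{\unm})$, in the wrong direction for what you need. The paper never performs this reduction: \cite[Corollary~2.9(b)]{hassanzadeh2012cohen} is invoked as already covering the hypothesis ``$R$ Gorenstein, $s=\htt(I)+2$, $I^{\unm}$ Cohen--Macaulay'' directly, so no descent between $I$ and $I^{\unm}$ is required in the argument for this corollary.
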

\noindent\textit{Proof:} In fact, by \cite[Corollary 2.9 (b)]{hassanzadeh2012cohen}, the complex $\mathcal{Z}^+_{\bullet}$ is acyclic. By  \autoref{prop2} and its remark, we conclude that the sequence $\mathbf{x}$ is regular on $S/\Kitt^{\mathfrak{g}}_s(s,\ff)$. Since $\mathbf{x}$ also is an $S$-regular sequence, then $\Kitt(\mathfrak{a},I)$ deforms to $\Kitt^{\mathfrak{g}}(s,\ff)$
\qed\\[0.3cm]

\section{Generic Residual}
\indent Let $R$ be a Noetherian ring, $I$ a non-zero ideal of $R$ and $s\geq \max\{1,\htt(I)\}$ a positive integer. Consider $\ff= f_1,\dots,f_r$ a system of generators of $I$. Let $S = R[U_{ij}\:\:;\:\: 1 \leq i \leq r, 1 \leq j \leq s]$ be the polynomial extension of $R$ in $rs$ indeterminates and $\mathbf{a}'=\:a_1',\dots,a_s' \in S$ such that
$$\begin{bmatrix}
a_1' & a_2' & \cdots & a_s'
\end{bmatrix}=
\begin{bmatrix}
f_1 & f_2 & \cdots & f_r
\end{bmatrix}
\begin{bmatrix}
U_{11} & U_{12} & \cdots  & U_{1s} \\ 
U_{21} & U_{22} & \cdots & U_{2s} \\ 
\vdots & \vdots & \ddots & \vdots \\ 
U_{r1} & U_{r2} & \cdots  &  U_{rs}
\end{bmatrix}.$$
\begin{Definition}\label{R(s,f)}
    Given the notation established above, one defines the {\bf generic $s$-residual} of the system of generators $\ff$ the ideal $((\mathbf{a'}):_S IS)$ of $S$ and it is denoted as $\R(s,\ff) \subseteq S$.
\end{Definition}
Notice that this definition draws inspiration from the definition of,  $\RI(s,\ff) $, the {\bf generic $s$-residual intersection}  by  Huneke and Ulrich in \cite{huneke1988residual}. The difference lies in the omission of the $G_{s+1}$ condition. This property is important in ensuring that $\R(s,\ff)$ indeed is an $s$-residual intersection of $IS$ as one can verify in \cite[Lemma 2.2]{huneke1988residual}. 

One sees from the following proposition that having height at least $s$ is an extreme case for generic $s$-residuals.

We keep the notation of the beginning of this section.
\begin{Proposition}
    Let $(R,\fm)$ be a  local Noetherian   formally equidimensional ring, $I=(\ff)$ an ideal and $s\geq \max\{1,\htt(I)\}$ an integer.  If $I$ is not  a nilpotent ideal, then
    $\htt(\R(s,\ff))\leq s$.
\end{Proposition}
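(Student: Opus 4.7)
The plan is to exhibit a prime $\mathcal{Q}$ of $S=R[U_{ij}]$ that contains $\R(s,\ff)$ and has height at most $s$. Since $\R(s,\ff)=(\mathbf{a}'):_S IS$ and the ideal $(\mathbf{a}')=(a_1',\dots,a_s')$ is generated by $s$ elements, Krull's height theorem will automatically supply the required height bound as soon as I locate a minimal prime of $(\mathbf{a}')$ avoiding $IS$; the non-nilpotence of $I$ is what will produce such a prime.

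First, I claim there exists a minimal prime $\mathcal{Q}$ of $(\mathbf{a}')$ in $S$ with $IS\not\subseteq\mathcal{Q}$. Suppose, to the contrary, that every minimal prime of $(\mathbf{a}')$ contains $IS$. Then $IS\subseteq\sqrt{(\mathbf{a}')}$, so for each $i$ there is $n_i\geq 1$ with $f_i^{n_i}\in(\mathbf{a}')$. Consider the $R$-algebra homomorphism $\pi:S\to R$ defined by $\pi(U_{ij})=0$ for all $i,j$; since $\pi(a_j')=\sum_i f_i\cdot 0 = 0$, we have $\pi((\mathbf{a}'))=0$. Applying $\pi$ to $f_i^{n_i}\in(\mathbf{a}')$ yields $f_i^{n_i}=0$ in $R$ for every $i$, so $I=(\ff)$ is nilpotent, contradicting the hypothesis.

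Fix such a $\mathcal{Q}$ and choose $x\in IS\setminus\mathcal{Q}$. Any $y\in\R(s,\ff)=(\mathbf{a}'):_S IS$ satisfies $yx\in(\mathbf{a}')\subseteq\mathcal{Q}$, and since $x\notin\mathcal{Q}$ this forces $y\in\mathcal{Q}$; therefore $\R(s,\ff)\subseteq\mathcal{Q}$. Because $(\mathbf{a}')$ is generated by $s$ elements in the Noetherian ring $S$, Krull's height theorem gives $\htt(\mathcal{Q})\leq s$, and combining the two bounds yields $\htt(\R(s,\ff))\leq \htt(\mathcal{Q})\leq s$. The crux of the argument is the existence step above: the naive specialization $U_{ij}\mapsto 0$ translates ``every minimal prime of the generic ideal $(\mathbf{a}')$ contains $IS$'' into nilpotence of $I$, which is precisely what is forbidden. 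I note in passing that neither the local nor the formally equidimensional hypothesis on $R$ is used in this argument; I expect they are stated to match the standing setup of the surrounding theory of generic residual intersections.
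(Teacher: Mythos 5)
Your proof is correct and takes a genuinely different, more elementary route than the paper's. The paper argues by contradiction: assuming $\htt(\R(s,\ff))>s$, it adjoins an indeterminate $t$, then uses the formally equidimensional hypothesis (via the idea behind an integral-dependence criterion in the cited reference) to obtain $(I,t)\subseteq\overline{((\mathbf{a}'),t)}$ in $S[t]$, and finally specializes $U_{ij}\mapsto 0$ to conclude that $(I,t)$ is integral over $(t)$, forcing $I$ to be nilpotent. Your argument instead produces directly a minimal prime $\mathcal{Q}$ of $(\mathbf{a}')$ that fails to contain $IS$: if every minimal prime of $(\mathbf{a}')$ contained $IS$, then $IS\subseteq\sqrt{(\mathbf{a}')}$, and the specialization $U_{ij}\mapsto 0$ would send $(\mathbf{a}')$ to $0$ while fixing each $f_i$, making every $f_i$ nilpotent; once such a $\mathcal{Q}$ exists, primeness pushes $\R(s,\ff)=(\mathbf{a}'):_S IS$ into $\mathcal{Q}$, and Krull's principal ideal theorem gives $\htt(\mathcal{Q})\leq s$. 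Your route buys simplicity, avoids integral closure entirely, and also exposes, as you correctly note, that the local and formally equidimensional hypotheses are superfluous for this particular height bound — Noetherianity of $R$ (for Krull's theorem) is all that is really used. The paper's integral-closure argument is heavier machinery here, but it does tie the statement to the reduction-theoretic framework that the surrounding theory of residual intersections lives in.
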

\begin{proof} Suppose by contradiction that $\htt(\R(s,\ff))> s$. 
Denote $\fa = (\mathbf{a}') \subseteq S$ and let $t$ be an indeterminate over $S$. Observe that 
$${    (\fa, t):_{S[t]}(I,t)= ((\fa:_SIS),t)}$$
Furthermore, note that $\R(s,\ff) = \fa:_SIS$ and
\begin{gather*}
      \htt\big( (\fa, t):_{S[t]}(IS,t)\big) = \htt((\fa:_SI),t) = \htt(\R(s,\ff))+1 > s+1
\end{gather*}
Since formal equidimensionality is stable under localization and polynomial extension \cite[Theorem 18.13]{HeremannBlowing}, $S[t]$ is a locally formally equidimensional ring. Then applying  the same idea of the  proof of  \cite[Proposition 3]{RemarksUlrich} for the non-local ring $S[t]$, one has that $(I,t)S \subseteq \overline{(\mathfrak{a},t)}$ where $ \overline{(\mathfrak{a},t)}$ is the integral closure of the ideal $(\mathfrak{a},t)$.  Recall that $\fa$ is a generic subideal of $IS$.  
By specialization $U_{ij}$ to $0$, one concludes that $(I,t)$ is integral over the principal ideal $(t)$. Since $t$ is an indeterminate, one has $I$ is a nilpotent ideal which is a contradiction. Hence $\htt(\R(s,\ff))\leq s$.
\end{proof}
Notice that the structure of generic $s$-residual depends on the choice of generators of $I$.
Similarly, to what occurs with the generic Kitt, it can be demonstrated that the generic $s$-residual of the same ideal with different generators is also unique up to universal equivalence. 

In his Ph.D. thesis, V. Bouça  proved that for any $s$-generated ideal $\fa \subseteq (\ff)$ in local Cohen-Macaulay rings, one has  $\htt(\Kitt^\fg(s,\ff)) \geq \htt(\Kitt(\fa,(\ff))).$ We noted this conclusion may not always hold. The  following example is a counterexample.
\begin{Example}
{\normalfont Let $\mathbb{K}$ be a field, $R = \mathbb{K}[x_0,\dots,x_3]$ and  $T = \mathbb{K}[s,t]$. Let $\fa=(x_2^3 - x_1x_3^2,\:x_0x_2^2 - x_1^2x_3,\: x_1^3 - x_0^2x_2) \subset R$, a sub-ideal of the defining ideal of the monomial curve $k[s^{4},s^{3}t,st^{3},t^{4}]$.
Let  \begin{gather*}
  I = \fa:_R(x_0,x_1, x_2, x_3) = (x_2^3-x_1x_3^2,\: x_0x_2^2-x_1^2x_3,\:x_1^3-x_0^2x_2,\: x_1^2x_2^2 -x_0x_1x_2x_3) \\ =: (f_1,f_2,f_3,f_4) \supset \fa.  
\end{gather*}
Since $J = \fa:_R I = (x_0,x_1,x_2,x_3)$, one concludes that $\htt(\Kitt(\fa,I)) = \htt(J) = 4$. Now consider $S = R[U_{1},\dots,U_{12}]$  the polynomial extension of $R$ in twelve indeterminates and let $\alpha= \alpha_1,\alpha_2,\alpha_3 \in S$ where
\[
\begin{bmatrix}
\alpha_1 & \alpha_2 & \alpha_3
\end{bmatrix} =
\begin{bmatrix}
f_1 & f_2 & f_3 & f_4
\end{bmatrix}\begin{bmatrix}
U_1 & U_5 & U_9 \\ 
U_2 & U_6 & U_{10} \\ 
U_3 & U_7 & U_{11} \\ 
U_4 & U_8 & U_{12} 
\end{bmatrix}. 
\]
Computations in  \textit{Macaulay2} shows that $\htt(\Kitt^\fg(3,\ff)) = \htt(\RI(3,\ff)) = 3$. Hence one obtains an specialization $\Kitt(\fa,I)$ of $\Kitt^\fg(3,\ff)$ such that $\htt(\Kitt^\fg(3,\ff)) < \htt(\Kitt(\fa,I))$. }
\end{Example}
Taking this direction, we proved that, under more restrictive conditions on the ring, the following inequality holds \begin{gather*}
        \htt(\Kitt^\fg(s,\ff)_{\fn}) \geq \htt(\Kitt(\fa,I)_{\fm})
    \end{gather*}
for any maximal ideal $\fn \subseteq S$ containing $\Kitt^\fg(s,\ff) + (\mathbf{x})$, where $\fm = \fn\cap R$, $\mathbf{x}=U_{11} - c_{11},\dots,U_{rs} - c_{rs}$ and $c_{11},\dots,c_{rs} \in R$ are such that $a_i = \sum_{k=1}^rc_{ki}f_k$ for all $i=1,\dots,s.$ 

Two lemmas are required to prove this result.
\begin{Lemma}\label{lemma345}
    Let $R$ be a commutative ring and $c_1,\dots,c_n$ elements of $R$. Consider $S = R[X_1,\dots,X_n]$ the polynomial extension of $R$ in $n$ indeterminates and let $\fn$ be an ideal of $R$ containing $\fc := (X_1 - c_1,\dots,X_n - c_n)$. Then
\begin{enumerate}[(i)]
    \item If $\fn$ is a proper ideal of $S$, then $\big(\{f(c_1,\dots,c_n)\:\:;\:\:f \in \fn\}\big)S \subset S$ also is a proper ideal of $S$;
    \item If $\fn$ is a maximal ideal of $S$, then $\fn = \fc + \big(\{f(c_1,\dots,c_n)\:\:;\:\:f \in \fn\}\big)S$.
\end{enumerate}
\end{Lemma}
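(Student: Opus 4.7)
The plan is to exploit the evaluation homomorphism $\sigma\colon S\twoheadrightarrow R$ sending $X_i\mapsto c_i$, whose kernel is precisely $\mathfrak{c}$. Since $\mathfrak{c}\subseteq\mathfrak{n}$, the image $\sigma(\mathfrak{n})$ is an ideal of $R$ that completely controls $\mathfrak{n}$: indeed, $\mathfrak{n}=\sigma^{-1}(\sigma(\mathfrak{n}))$. The set $\{f(c_1,\dots,c_n):f\in\mathfrak{n}\}$ appearing in the statement is exactly $\sigma(\mathfrak{n})$, so the ideal it generates in $S$ is the extension $\sigma(\mathfrak{n})S$ of an ideal from $R$ to the polynomial ring $S=R[X_1,\dots,X_n]$.

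For part (i), I would argue that if $\sigma(\mathfrak{n})S=S$ then in particular $1\in\sigma(\mathfrak{n})S$, and since $\sigma(\mathfrak{n})S\cap R=\sigma(\mathfrak{n})$ (a standard fact for ideal extensions along a polynomial inclusion), this forces $1\in\sigma(\mathfrak{n})$. But then $1=\sigma(f)$ for some $f\in\mathfrak{n}$, whence $1-f\in\mathfrak{c}\subseteq\mathfrak{n}$ and therefore $1\in\mathfrak{n}$, contradicting the properness of $\mathfrak{n}$. Hence $\sigma(\mathfrak{n})S$ is proper.

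For part (ii), the containment $\mathfrak{c}+\sigma(\mathfrak{n})S\subseteq\mathfrak{n}$ is immediate once one observes that for every $f\in\mathfrak{n}$ the identity $\sigma(f)=f-(f-\sigma(f))$ together with $f-\sigma(f)\in\mathfrak{c}\subseteq\mathfrak{n}$ shows $\sigma(f)\in\mathfrak{n}$, so the generators of $\sigma(\mathfrak{n})S$ already lie in $\mathfrak{n}$. The reverse containment uses the same decomposition in the other direction: every $f\in\mathfrak{n}$ can be written as $f=(f-\sigma(f))+\sigma(f)$ with $f-\sigma(f)\in\mathfrak{c}$ and $\sigma(f)\in\sigma(\mathfrak{n})\subseteq\sigma(\mathfrak{n})S$, giving $f\in\mathfrak{c}+\sigma(\mathfrak{n})S$. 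In fact this equality holds for \emph{any} ideal $\mathfrak{n}\supseteq\mathfrak{c}$, not only maximal ones; the hypothesis of maximality in (ii) is not used in the argument and only matches the way the lemma will be invoked in the sequel. There is no substantive obstacle here, the only care needed is in distinguishing elements of $R$ from their images in $S$ under the inclusion $R\hookrightarrow S$, which makes the decomposition $f=(f-\sigma(f))+\sigma(f)$ legitimate inside $S$.
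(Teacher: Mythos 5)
Your proof is correct, and for part (ii) it follows a genuinely different and in fact tighter route than the paper's. The paper proves only the inclusion $\fn\subseteq\fc+\sigma(\fn)S$ by division, then invokes maximality of $\fn$ to reduce to showing that $\fc+\sigma(\fn)S$ is proper, which it does by contradiction using part (i) again. You instead prove both inclusions directly from the single decomposition $f=(f-\sigma(f))+\sigma(f)$ with $f-\sigma(f)\in\ker\sigma=\fc$: one direction gives $\fn\subseteq\fc+\sigma(\fn)S$ and the other gives $\sigma(\fn)S\subseteq\fn$, hence equality. This is cleaner and, as you correctly note, shows (ii) holds for \emph{any} ideal $\fn\supseteq\fc$, not just maximal ones, and does not need (i) at all. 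For part (i), your use of the faithful-flatness fact $\sigma(\fn)S\cap R=\sigma(\fn)$ is a small detour compared to the paper's direct substitution, but it is a standard and correct fact. One small thing worth making explicit if you write this up: you silently identify $\ker\sigma$ with $\fc$; this is true (it follows from the same division/decomposition) but deserves a one-line justification since it is the hinge on which both $\sigma^{-1}(\sigma(\fn))=\fn$ and the decomposition $f=(f-\sigma(f))+\sigma(f)\in\fc+\sigma(\fn)S$ rest.
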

\noindent\textit{Proof:} \textit{(i)} Suppose by contradiction that $\big(\{f(c_1,\dots,c_n)\:\:;\:\:f \in \fn\}\big) = S$. Let $f_1,\dots,f_m \in \fn$ and $g_1,\dots,g_m \in S$ such that \begin{gather*}
    \sum_{k=1}^mf_k(c)g_k(X) = 1.
\end{gather*}
Applying the division algorithm, there are $h_1,\dots,h_k \in \fc$ such that $f_k = h_k + f_k(c)$ for all $k=1,\dots,m$. Thus
\begin{gather*}
    1 = \sum_{k=1}^m(f_k(X) - h_k(X))g_k(X) = \sum_{k=1}^mf_k(X)g_k(X) -  \sum_{k=1}^m h_k(X)g_k(X) \in \fn + \fc = \fn,  
\end{gather*}
giving a contradiction.\\[0.3cm]
\noindent\textit{(ii)} By using the division algorithm, one can see that $\fn \subseteq \fc + \big(\{f(c_1,\dots,c_n)\:\:;\:\:f \in \fn\}\big)S$. Hence, since $\fn$ is maximal, it is enough to show that $\fc + \big(\{f(c_1,\dots,c_n)\:\:;\:\:f \in \fn\}\big)S$ is a proper ideal. Suppose by contradiction that $\fc + \big(\{f(c_1,\dots,c_n)\:\:;\:\:f \in \fn\}\big)S = S$. Let $h \in \fc$, $f_1,\dots,f_m \in \fn$ and $g_1,\dots,g_m \in S$ such that \begin{gather*}
    h(X) + \sum_{k=1}^mf_k(c)g_k(X) = 1.
\end{gather*} 
Evaluating in $(c_1,\dots,c_n)$, one gets that $\sum_{k=1}^mf_k(c)g_k(c) = 1$, contradicting the item $(i)$. Hence $\fc + \big(\{f(c_1,\dots,c_n)\:\:;\:\:f \in \fn\}\big)$ is a proper ideal of $S$. 
\qed
\begin{Lemma}\label{polynomialExtension}
Let $R$ be a Noetherian equi-codimensional ring i.e. $\dim(R) = \htt(\fm)$ for all maximal ideal $\fm$. Let $S = R[x_1,\dots,x_n]$ be the polynomial extension of $R$ in $n$ indeterminates and $c_1,\dots,c_n \in R$. Then, for any maximal ideal $\fn$ of $S$ containing the ideal $(x_1 - c_1,\dots, x_n - c_n)$, one has $$\htt(\fn) = \dim(S) = \dim(R) + n.$$
\end{Lemma}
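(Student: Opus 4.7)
\textit{Proof proposal.} The plan is to contract $\fn$ back to $R$, show that the contraction is a maximal ideal of $R$ realizing the full dimension, and then splice a saturated chain of primes in $R$ with a saturated chain of primes in the fiber $S/\fm S$ to reach $\fn$.

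First I would identify $\fm := R \cap \fn$ as maximal in $R$. Consider the $R$-algebra evaluation map $\phi \colon S \to R$, $x_i \mapsto c_i$, which is surjective with kernel $\fc := (x_1-c_1,\ldots,x_n-c_n) \subseteq \fn$. The induced surjection $R \cong S/\fc \twoheadrightarrow S/\fn$ has kernel $\phi(\fn) = \{f(c_1,\ldots,c_n) : f \in \fn\}$, which by a direct check (or by \autoref{lemma345}) coincides with $R \cap \fn = \fm$. Since $\fn$ is maximal, $S/\fn$ is a field, hence $\fm$ is a maximal ideal of $R$; by hypothesis $\htt(\fm) = \dim(R) =: d$.

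Next I would produce a chain of primes of length $d+n$ in $S$ ending at $\fn$. Choose a saturated chain $\fp_0 \subsetneq \fp_1 \subsetneq \cdots \subsetneq \fp_d = \fm$ in $R$. Each extension $\fp_i S$ is prime, since $S/\fp_i S \cong (R/\fp_i)[x_1,\ldots,x_n]$ is a domain, and the chain $\fp_0 S \subsetneq \cdots \subsetneq \fp_d S = \fm S$ remains strictly increasing because contracting back to $R$ recovers the $\fp_i$. In the fiber $S/\fm S \cong k[x_1,\ldots,x_n]$ with $k = R/\fm$ a field, the image $\bar\fn$ contains the maximal ideal $(x_1 - \bar c_1,\ldots,x_n - \bar c_n)$; maximality of $\bar\fn$ forces equality, whence $\htt(\bar\fn) = n$. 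Lifting a saturated chain of length $n$ from $\fm S$ to $\fn$ and splicing with the chain below gives $\htt(\fn) \geq d + n$.

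For the reverse inequality I would invoke the classical formula $\dim(R[x_1,\ldots,x_n]) = \dim(R)+n$ valid for Noetherian $R$, which yields $\htt(\fn) \leq \dim(S) = d+n$. Combining both inequalities gives the desired equality $\htt(\fn) = \dim(S) = \dim(R) + n$. I do not foresee any serious obstacle: the argument hinges on the equidimensionality hypothesis on $R$, which is exactly what is needed to guarantee a chain of length $d$ up to $\fm$, and the polynomial-ring machinery takes care of everything else.
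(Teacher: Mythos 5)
Your proof is correct. Both you and the paper first identify $\fm = \fn \cap R$ as a maximal ideal via the evaluation map, invoke the equidimensionality hypothesis to get $\htt(\fm)=\dim R$, and then exploit the fact that the fiber $S_\fn/\fm S_\fn$ has dimension $n$. The difference is in how the height of $\fn$ is then computed: the paper applies the dimension formula for flat local extensions (Matsumura Theorem 15.1) to get $\htt_S(\fn) = \htt_R(\fm) + \dim(S_\fn/\fm S_\fn)$ in one stroke, whereas you build an explicit saturated chain of primes — extending a chain of length $d$ from $R$ into $S$ via $\fp_i \mapsto \fp_i S$, then lifting a chain of length $n$ from the fiber $k[x_1,\ldots,x_n]$, and splicing — to obtain the lower bound $\htt(\fn)\geq d+n$, before capping with the classical $\dim S = \dim R + n$. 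Your route is more hands-on and avoids citing the flat dimension formula; the paper's is shorter but leans on a stronger black box. Both are sound and yield the same conclusion.
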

\noindent\textit{Proof:} Let $\fn$ be a maximal ideal of $S$ containing $\mathfrak{c}:=(x_1 - c_1,,\dots, x_n - c_n)$. Firstly notice that, since $\fn$  contains $\mathfrak{c}$, then $\fn = \mathfrak{c} + (\fn\cap R)S$. Indeed, by \autoref{lemma345}, one has 
$$\fn = \fc + \big(\{f(c_1,\dots,c_n)\:\:;\:\:f(x_1,\dots,x_n) \in \fn\}\big)S =: \fc + \fa.$$
Thus $\fn\cap R = \fa\cap R$ and so 
\begin{gather*}
   \fn = \fc + (\fa\cap R)S = \fc + (\fn\cap R)S \subseteq \fn + \fn = \fn
\end{gather*}
and the equality follows. As $\fn\cap R$ is a prime ideal of $R$, then $\fn \subseteq \mathfrak{c} + \fm S$ for some maximal ideal $\fm$ of $R$ containing $\fn\cap R$. Note this inclusion actually is an equality. Indeed, if it was not, there would be $a_1,\dots,a_m\in \fm$, $g_1,\dots,g_m \in S$ and a polynomial $f \in S$ such that $f(c_1,\dots,c_n) = 0$ and 
\begin{gather*}
    f(x_1,\dots,x_n) + \sum_{k=1}^ma_kg_k = 1.
\end{gather*}
Evaluating this expression in $(c_1,\dots,c_n)$, one gets a contradiction. Thus $\fn = \mathfrak{c} + \fm S$ and $\fn \cap R = \fm$. Applying \cite[Theorem 15.1]{matsumura1989commutative}, one gets that
\begin{gather*}
    \htt_S(\fn) = \htt_R(\fm) + \dim\bigg(\frac{S_{\fn}}{\fm S_{\fn}}\bigg) = \dim(R) + \dim\bigg(\frac{S_{\fn}}{\fm S_{\fn}}\bigg). 
\end{gather*}
As $S_{\fn}/\fm S_{\fn} \cong (R/\fm)[x_1,\dots,x_n]_{\fc}$ is the localization of a polynomial ring over a field at a maximal ideal, its dimension is $n$, hence one gets
\begin{gather*}
    \dim(S) \geq \htt_S(\fn)  = \dim(R) + \dim\bigg(\frac{S_{\fn}}{\fm S_{\fn}}\bigg) = \dim(R) +n = \dim(S)
\end{gather*}
and the statement follows.
\qed\\[0.3cm]
\indent The following example shows the hypothesis that $\fn$ contains some ideal of form $\fc$ in \autoref{polynomialExtension} is  necessary. 
\begin{Example} {\normalfont
     Let $R = \mathbb{K}\llbracket x \rrbracket$ be the ring of formal series over a field $\mathbb{K}$. Consider $S = R[y]$ the polynomial extension of $R$ in one indeterminate. Notice that $R$ is a local ring. The maximal ideal $\fn= (1-xy)\subseteq S$ does not contain an element of form $y-c$ and $\htt(\fn) = 1 \neq 2 = \dim(S)$.} 
\end{Example}
\indent Finally, we are ready to establish the promised result. Recall that a ring $R$ is said to satisfy the \textit{dimension equality} if $$\dim(R) = \dim(I) +  \dim(R/I)$$ for all ideal $I$ of $R$.
\begin{Proposition}
\label{heightKitt}
    Let $R$ be Cohen-Macaulay local rings (or an  affine domain), $\fa \subset I$ ideals of $R$ and $s$ a positive integer. Consider $\mathbf{a}=\:a_1,\dots,a_s$, $\ff=\: f_1,\dots,f_r$ systems of generators of the ideals $\fa$, $I$, respectively and $c_{11},\dots,c_{rs} \in R$ such that $a_i = \sum_{k=1}^rc_{ki}f_k$. Let $S = R[U_{ij}\:\:;\:\: 1\leq i \leq r,\: 1 \leq j \leq s]$ be the polynomial extension of $R$ in $rs$ indeterminates and $\mathbf{x}=\: U_{11} - c_{11}, \dots, U_{rs} - c_{rs}$. Then, given a maximal ideal $\fn$ of $S$ containing $\Kitt^\fg(s,\ff) +(\mathbf{x})$ and denoting $\fm := \fn \cap R$, one has \begin{gather*}
        \htt(\Kitt^\fg(s,\ff)_{\fn}) \geq \htt(\Kitt(\fa,I)_{\fm}).
    \end{gather*}
In particular, if $\fa:_R I$ is an $s$-residual intersection of $I$, then $\R(s,\ff)_{{\fn}}$ is an $s$-residual intersection for all maximal ideal $\fn$ of $S$ containing $\R(s,\ff) +(\mathbf{x})$. 
\end{Proposition}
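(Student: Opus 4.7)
The plan is to control both heights via the common environment $S_\fn$, which is a Cohen--Macaulay local ring (in the CM local case) or the local ring of an affine domain (hence catenary) of dimension $\dim R + rs$, in which $\mathbf{x}$ is a regular sequence specializing $S_\fn$ to $R_\fm$. First I would fix the geometry. Since $\fn\supseteq(\mathbf{x})$, evaluation $U_{ij}\mapsto c_{ij}$ gives a surjection $S\twoheadrightarrow R/(\fn\cap R)$ whose kernel lies in $\fn$; combined with \autoref{lemma345}(ii), this shows $\fm:=\fn\cap R$ is a maximal ideal of $R$ and $\fn=(\mathbf{x})+\fm S$. The isomorphism $\phi:S/(\mathbf{x})\xrightarrow{\sim}R$ of \autoref{remarkiso} sends $\fn/(\mathbf{x})$ to $\fm$, so $S_\fn/(\mathbf{x})_\fn\simeq R_\fm$, and by \autoref{Prop3.1} it sends $(\Kitt^{\fg}(s,\ff)+(\mathbf{x}))_\fn/(\mathbf{x})_\fn$ onto $\Kitt(\fa,I)_\fm$. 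After the substitution $V_{ij}:=U_{ij}-c_{ij}$ the sequence $\mathbf{x}$ is a sequence of indeterminates in $R[V_{ij}]$, hence regular of length $rs$ and still regular in $S_\fn$; \autoref{polynomialExtension} gives $\dim S_\fn=\dim R+rs$.

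Next I would extract the height inequality. In $S_\fn$, catenarity (which holds in every Cohen--Macaulay local ring and in every local ring of an affine domain) yields for any prime $\fp\supseteq(\mathbf{x})_\fn$ the identity $\htt_{S_\fn}\fp=\htt_{S_\fn/(\mathbf{x})_\fn}(\fp/(\mathbf{x})_\fn)+rs$. Minimizing over the minimal primes of $(\Kitt^{\fg}(s,\ff)+(\mathbf{x}))_\fn$ gives
\[
\htt_{S_\fn}\bigl(\Kitt^{\fg}(s,\ff)_\fn+(\mathbf{x})_\fn\bigr)=\htt_{R_\fm}\Kitt(\fa,I)_\fm+rs.
\]
On the other hand, choosing a minimal prime $\fp_0$ of $\Kitt^{\fg}(s,\ff)_\fn$ with $\htt\fp_0=\htt\Kitt^{\fg}(s,\ff)_\fn$ and applying Krull's height theorem to the image of $\mathbf{x}$ in $S_\fn/\fp_0$ produces a prime $\fp\supseteq\Kitt^{\fg}(s,\ff)_\fn+(\mathbf{x})_\fn$ with $\htt\fp\leq\htt\fp_0+rs$ (again by the same catenary identity), so
\[
\htt_{S_\fn}\bigl(\Kitt^{\fg}(s,\ff)_\fn+(\mathbf{x})_\fn\bigr)\leq\htt_{S_\fn}\Kitt^{\fg}(s,\ff)_\fn+rs.
\]
Combining these two displays cancels the $rs$'s and gives the required inequality $\htt\Kitt^{\fg}(s,\ff)_\fn\geq\htt\Kitt(\fa,I)_\fm$.

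For the residual intersection assertion, \autoref{Main}(iii) applied over $S$ to $(\mathbf{a}')\subseteq IS$ gives $\Kitt^{\fg}(s,\ff)\subseteq\R(s,\ff)$ with the same radical, so $\htt\R(s,\ff)_\fn=\htt\Kitt^{\fg}(s,\ff)_\fn$; similarly $\Kitt(\fa,I)$ and $\fa:_R I$ share a radical, and under the hypothesis that $\fa:_R I$ is an $s$-residual intersection one has $\htt(\fa:_R I)_\fm\geq s$, hence $\htt\Kitt(\fa,I)_\fm\geq s$. The main inequality then gives $\htt\R(s,\ff)_\fn\geq s$, and since $(\mathbf{a}')_\fn$ is generated by $s$ elements sitting in the proper ideal $\fn S_\fn$ with $\R(s,\ff)_\fn=(\mathbf{a}')_\fn:_{S_\fn}(IS)_\fn$, this realizes $\R(s,\ff)_\fn$ as an $s$-residual intersection of $(IS)_\fn$.

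The principal technical point—and the expected main obstacle—is the catenary identity $\htt_{S_\fn}\fp=\htt_{S_\fn/(\mathbf{x})_\fn}(\fp/(\mathbf{x})_\fn)+rs$ for primes $\fp\supseteq(\mathbf{x})_\fn$, which must be established uniformly in the Cohen--Macaulay local case (where $S_\fn$ is CM local of dimension $\dim R+rs$ and $\mathbf{x}$ is part of a system of parameters) and in the affine--domain case (where one relies on the fact that affine domains are universally catenary and satisfy the dimension formula); once this identity is in hand, the rest of the argument is essentially bookkeeping.
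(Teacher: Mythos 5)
Your proof is correct and follows essentially the same route as the paper's: both rest on the specialization isomorphism $S_\fn/(\Kitt^{\fg}(s,\ff)+(\mathbf{x}))_\fn\cong R_\fm/\Kitt(\fa,I)_\fm$ from \autoref{Prop3.1}, the equality $\dim S_\fn=\dim R+rs$ from \autoref{polynomialExtension}, Krull's height theorem, and the catenary/dimension-equality property of $S_\fn$ and $R_\fm$, with the ``in particular'' part handled identically via \autoref{Main}(iii). The only difference is bookkeeping: the paper runs the argument through dimensions of quotient rings in a single chain of (in)equalities, while you run it through heights of primes and minimal-prime chasing, but these are the same computation phrased two ways.
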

\noindent\textit{Proof:} We first notice  that $R$ and $S$ are both  locally universally catenary Noetherian  rings  such that all maximal ideals have the same height and all local rings satisfies the dimension equality. Furthermore, if the ideal $\Kitt^\fg(s,\ff) +(\mathbf{x})$ is not proper, then \autoref{Prop3.1} implies that $\Kitt(\fa,I)=(1)$. Therefore $\fa=I$ by \autoref{Main}(iii), which is not the case. 

Let $\mathfrak{a} = (a_1,\dots,a_s) \subset I$ be an ideal generated by $s$ elements and $\Phi = [c_{ij}]_{r\times s}$ such that \begin{gather*}
    \begin{bmatrix}
a_1 & a_2 & \cdots  &a_s 
\end{bmatrix} = \begin{bmatrix}
f_1 & f_2 & \cdots  & f_r 
\end{bmatrix}\begin{bmatrix}
c_{11} & c_{12}  &\cdots  & c_{1s} \\ 
c_{21} & c_{22} & \cdots & c_{2s} \\ 
\vdots & \vdots &\ddots  & \vdots \\ 
c_{r1} & c_{r2} &\cdots  & c_{rs} 
\end{bmatrix} = \begin{bmatrix}
f_1 & f_2 & \cdots  & f_r 
\end{bmatrix}\Phi. 
\end{gather*}
Now let $a'_1,\dots,a'_s \in S$ such that \begin{gather*}
    \begin{bmatrix}
a'_1 & a'_2 & \cdots  &a'_s 
\end{bmatrix} = \begin{bmatrix}
f_1 & f_2 & \cdots  & f_r 
\end{bmatrix}\begin{bmatrix}
U_{11} & U_{12}  &\cdots  & U_{1s} \\ 
U_{21} & U_{22} & \cdots & U_{2s} \\ 
\vdots & \vdots &\ddots  & \vdots \\ 
U_{r1} & U_{r2} &\cdots  & U_{rs} 
\end{bmatrix} = \begin{bmatrix}
f_1 & f_2 & \cdots  & f_r 
\end{bmatrix}\Psi. 
\end{gather*}
Let $\fn$ be a maximal ideal of $S$ which contains $\Kitt^{\mathfrak{g}}(s,\ff) + (\mathbf{x})$. Observe that $\fn\cap R = \fm$ is a maximal ideal of $R$. Furthermore, by \autoref{Prop3.1}, one has 
\begin{gather}\label{kittlocal}
   \frac{S_{\fn}}{(\Kitt^{\mathfrak{g}}(s,\ff) + (\mathbf{x}))_{\fn}} \cong \frac{R_{\fm}}{\Kitt(\fa,I)_{\fm}}.
\end{gather}
Since the catenary property is stable under localization and the rings $R_{\fm},\:S_{\fn}$ satisfy the dimension equality \cite[page 31]{matsumura1989commutative}, \cite[Lemma 18.6]{HeremannBlowing}, one has
\begin{gather*}
    \dim(R) - \htt(\Kitt(\fa,I)_{\fm}) = \dim(R_{\fm}) - \htt(\Kitt(\fa,I)_{\fm}) = \dim\bigg(\frac{R_{\fm}}{\Kitt(\fa,I)_{\fm}}\bigg) \\ = \dim\bigg(\frac{S_{\fn}}{(\Kitt^{\mathfrak{g}}(s,\ff) + (\mathbf{x}))_{\fn}}\bigg) \geq \dim\bigg(\frac{S_{\fn}}{\Kitt^{\mathfrak{g}}(s,\ff)_{\fn}}\bigg) -rs = \dim(S_{\fn}) - \htt(\Kitt^\fg(s,\ff)_{\fn}) -rs \\ = \dim(S) - \htt(\Kitt^\fg(s,\ff)_{\fn}) -rs = \dim(R) - \htt(\Kitt^\fg(s,\ff)_{\fn}),
\end{gather*}
which implies that $\htt(\Kitt^\fg(s,\ff)_{\fn}) \geq \htt(\Kitt(\fa,I)_{\fm})$. In particular, if $\fa:_RI$ is an $s$-residual intersection, one has $$\htt(\Kitt^\fg(s,\ff)_{\fn}) \geq \htt(\Kitt(\fa,I)_{\fm}) \geq \htt(\Kitt(\fa,I))  = \htt(\fa:_RI) \geq s$$ for all maximal ideal $\fn$ of $S$ containing $\Kitt^\fg(s,\ff) +(\mathbf{x})$.
\qed\\[0.3cm]
\indent Combining  \autoref{deformg+1} and \autoref{heightKitt}, one obtains the following result. 
Here we show that for the first two extreme cases of residual intersections, the generic residual is a deformation of arbitrary residual. The main point of the following corollary is that the ideal $I$ need not to satisfy any condition. 
\begin{Corollary}\label{Lastmain}
    Let $R$ be a Cohen-Macaulay local ring and $J = \mathfrak{a}:_R I$ an $s$-residual intersection of $I$. Assume $s\leq \htt(I)+1$. Let $\ff = f_1,\dots,f_r$ and $\mathbf{a}= a_1,\dots,a_s$ be systems of generators for $I$ and $\fa$, respectively, with $[\mathbf{a}]=[c_{{ij}}][\ff]$. Let $S=R[U_{ij}]$ the polynomial extension of $R$ in $rs$ indeterminates. Then, for any maximal ideal $\fn$ containing $\R(s,\ff)+(U_{ij}-c_{ij})$, one has
   \begin{center}
    $(S_{\fn},\R(s,\ff)_{\fn})$ is a deformation of $(R,J)$. 
    \end{center}
\end{Corollary}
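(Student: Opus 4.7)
The plan is to combine \autoref{deformg+1} with \autoref{heightKitt} and one further application of \autoref{Main}(viii) after localization at $\fn$. Since $s\leq \htt(I)+1=\grade(I)+1$, \autoref{Main}(viii) applied to $\fa\subseteq I$ in $R$ yields $\Kitt(\fa,I)=J$, and \autoref{deformg+1} then supplies a deformation of $(R,J)$ by $(S,\Kitt^{\fg}(s,\ff))$ along the regular sequence $\mathbf{x}=U_{11}-c_{11},\dots,U_{rs}-c_{rs}$. The task therefore reduces to upgrading the global inclusion $\Kitt^{\fg}(s,\ff)\subseteq \R(s,\ff)$ to an equality after localization at $\fn$, and then restricting the above deformation datum to $S_\fn$.

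For the key equality $\Kitt^{\fg}(s,\ff)_\fn=\R(s,\ff)_\fn$, I would apply \autoref{Main}(viii) in the Cohen--Macaulay local ring $S_\fn$ to the pair of ideals $(\mathbf{a}')_\fn\subseteq IS_\fn$. By \autoref{heightKitt}, $\R(s,\ff)_\fn$ is an $s$-residual intersection of $IS_\fn$, so it suffices to verify the grade hypothesis $s\leq \grade(IS_\fn)+1$. Let $\fm$ denote the maximal ideal of $R$. Since the specialization isomorphism of \autoref{Prop3.1} restricts to the identity on $R$, any maximal ideal $\fn$ of $S$ containing $(\mathbf{x})$ must satisfy $\fn\cap R=\fm$; combined with \autoref{lemma345}(ii), this forces $\fn=\fm S+(\mathbf{x})$. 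Every minimal prime $\fq$ of $I$ in $R$ is contained in $\fm$, so the extension $\fq S$---a minimal prime of $IS$ in the polynomial extension $S$---is contained in $\fn$. Since minimal primes of $IS$ in $S$ are precisely the extensions of minimal primes of $I$ and preserve height, one concludes $\htt(IS_\fn)=\htt(I)=g$, and hence $\grade(IS_\fn)=g$ as $S_\fn$ is Cohen--Macaulay. Thus $s\leq g+1=\grade(IS_\fn)+1$, and \autoref{Main}(viii) delivers $\Kitt^{\fg}(s,\ff)_\fn=\R(s,\ff)_\fn$.

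To finish, I would localize the deformation produced by \autoref{deformg+1} at $\fn$. Regularity of $\mathbf{x}$ on $S$ and on $S/\Kitt^{\fg}(s,\ff)$ passes to $S_\fn$ and $S_\fn/\Kitt^{\fg}(s,\ff)_\fn$, and the isomorphism $S/(\mathbf{x})\cong R$ induces $S_\fn/(\mathbf{x})S_\fn\cong R$ carrying $(\R(s,\ff)+(\mathbf{x}))_\fn/(\mathbf{x})_\fn$ to $J$. This exhibits $(S_\fn,\R(s,\ff)_\fn)$ as a deformation of $(R,J)$. The principal technical hurdle is the height computation $\htt(IS_\fn)=g$ in the second paragraph; it depends crucially on the locality of $R$ dictating the form $\fn=\fm S+(\mathbf{x})$, for without this rigidity on $\fn$ the grade hypothesis of \autoref{Main}(viii) could fail in the localized ring, even though \autoref{heightKitt} still provides the $s$-residual intersection property for $\R(s,\ff)_\fn$.
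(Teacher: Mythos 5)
Your proposal is correct and follows essentially the same route as the paper: apply \autoref{heightKitt} to show $\R(s,\ff)_{\fn}$ is an $s$-residual intersection, invoke \autoref{Main}(viii) to identify both $J$ with $\Kitt(\fa,I)$ and $\R(s,\ff)_{\fn}$ with $\Kitt^{\fg}(s,\ff)_{\fn}$, and then use \autoref{deformg+1}(i) together with the isomorphism of \autoref{Prop3.1} to exhibit the deformation. You do supply one clarification the paper leaves implicit, namely that $\fn=\fm S+(\mathbf{x})$ (since $\fn\cap R$ must be maximal, hence equals $\fm$ by locality of $R$) forces $\grade(IS_{\fn})=\htt(I)$, so that the grade hypothesis of \autoref{Main}(viii) indeed holds in $S_{\fn}$; this is a worthwhile detail, though the phrase ``the specialization isomorphism restricts to the identity on $R$'' does not itself give $\fn\cap R=\fm$---the cleaner justification is via \autoref{lemma345}(ii) and maximality of $\fn\cap R$.
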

\begin{proof} According to \autoref{Main}(iii) and \autoref{heightKitt}, one has $$\htt(\R(s,\ff)_{\fn}) = \htt(\Kitt^\fg(s,\ff)_{\fn})\geq s.$$ Thus $\R(s,\ff)_{\fn}$ is an $s$-residual intersection of $IS_{\fn}$. Since $s\leq \htt(I)+1$, \autoref{Main}(viii) implies that $J=\Kitt(\fa,I)$ and $\R(s,\ff)_{\fn}=\Kitt^{\fg}(s,\ff)_{\fn}$ for any maximal ideal $\fn$ containing $(U_{ij}-c_{ij})=:\mathbf{x}$. Applying \autoref{deformg+1}(i), one concludes the sequence $\mathbf{x}_{\fn}$ is regular over $S_{\fn}$ and $S_{\fn}/\Kitt^{\fg}(s,\ff)_{\fn}$. Hence, by \autoref{kittlocal}, one has
\begin{gather*}
  \frac{S_{\fn}/(\mathbf{x})_{\fn}}{(\R(s,\ff) + (\mathbf{x}))_{\fn}/(\mathbf{x})_{\fn}} = \frac{S_{\fn}}{(\Kitt^{\mathfrak{g}}(s,\ff) + (\mathbf{x}))_{\fn}} \cong \frac{R_{\fm}}{\Kitt(\fa,I)_{\fm}} = \frac{R}{J}.
\end{gather*}
Hence the statement follows.
\end{proof}
By applying other properties of $\Kitt$ ideal represented in  \autoref{Main}, one may find condition under which the generic residual is a deformation of arbitrary one.
\begin{Corollary}\label{Lastmain2}
    Let $R$ be a Cohen-Macaulay local ring and $J = \mathfrak{a}:_R I$ an $s$-residual intersection of $I$.
    Let $\ff = f_1,\dots,f_r$ and $\mathbf{a}= a_1,\dots,a_s$ be systems of generators for $I$ and $\fa$, respectively, with $[\mathbf{a}]=[c_{{ij}}][\ff]$. 
    Suppose that $I$ is of height $g$ and satisfies any of the following conditions
    \begin{itemize}
    \item  $I$ satisfies the $G_{s}$ condition, it  is weakly $(s-2)-$residually $S_{2}$ and   satisfies SDC$_1$ condition at level $\min\{s-g-2,r-g\}$;
    \item   $I$ satisfies $\SD_1$;
    \end{itemize}
    Let $S=R[U_{ij}]$ the polynomial extension of $R$ in $rs$ indeterminates. Then, for any maximal ideal $\fn$ containing $\R(s,\ff)+(U_{ij}-c_{ij})$, one has
   \begin{center}
    $(S_{\fn},\R(s,\ff)_{\fn})$ is a deformation of $(R,J)$. 
    \end{center}
\end{Corollary}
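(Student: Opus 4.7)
The plan is to mimic the proof of \autoref{Lastmain}, substituting \autoref{Main}(viii) by \autoref{Main}(xi) or \autoref{Main}(xii) depending on which hypothesis is assumed, and replacing the appeal to \autoref{deformg+1} by a direct invocation of \autoref{prop2} together with its remark. Concretely, I first combine \autoref{Main}(iii) with \autoref{heightKitt} to obtain $\htt(\R(s,\ff)_\fn) = \htt(\Kitt^\fg(s,\ff)_\fn) \geq s$, so that $\R(s,\ff)_\fn$ is an $s$-residual intersection of $IS_\fn$. Applying \autoref{Main}(xi) (if $I$ satisfies $\SD_1$) or \autoref{Main}(xii) (under the $G_s$ plus weakly $(s-2)$-residually $S_2$ package) to both $R$ and $S_\fn$ then yields the two equalities $\Kitt(\fa,I) = J$ and $\Kitt^\fg(s,\ff)_\fn = \R(s,\ff)_\fn$, reducing the deformation problem for $\R$ to the corresponding deformation statement for $\Kitt$.

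To promote these equalities to a deformation, I must show that $\mathbf{x} = U_{ij}-c_{ij}$ is regular on $S_\fn/\Kitt^\fg(s,\ff)_\fn$; regularity on $S_\fn$ is automatic. By the remark following \autoref{prop2}, this reduces to checking that $\mathcal{Z}^+_\bullet(\fa,I)$ is acyclic. Under $\SD_1$, this acyclicity is exactly the ingredient powering \cite[Theorem~5.1]{boucca2019residual}, i.e.\ \autoref{Main}(xi). Under the second hypothesis package, the additional $\SDC_1$ assumption at level $\min\{s-g-2,r-g\}$ is included precisely to guarantee the acyclicity of $\mathcal{Z}^+_\bullet$ via the sliding-depth-on-cycles arguments of \cite{hassanzadeh2012cohen}, complementing \autoref{Main}(xii) which by itself only delivers the equality Kitt $=$ residual. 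Once acyclicity is in hand, \autoref{prop2} and its remark deliver regularity of $\mathbf{x}$ modulo $\Kitt^\fg(s,\ff)$, and combining this with the ring isomorphism of \autoref{Prop3.1} and \autoref{remarkiso} identifies
\[
\frac{S_\fn/(\mathbf{x})_\fn}{(\R(s,\ff)+(\mathbf{x}))_\fn/(\mathbf{x})_\fn} \;\cong\; \frac{R}{J},
\]
which closes the argument exactly as in the final lines of \autoref{Lastmain}.

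The main obstacle I anticipate is the bookkeeping in the first step: one must verify that the hypotheses on $I$ persist through the polynomial extension $S=R[U_{ij}]$ followed by localization at $\fn$, so that \autoref{Main}(xi) or (xii) applies again in the generic setting to $IS_\fn$. This is straightforward for $G_s$, $\SDC_1$, and $\SD_1$, since these are cohomological conditions that behave well under flat base change and localization. The delicate point is the weakly $(s-2)$-residually $S_2$ condition: it quantifies over geometric residual intersections of the extended ideal $IS_\fn$, so one has to check that every such residual arises (or is controlled) by a geometric residual of $I$ in $R$, rather than an exotic residual created by the new variables $U_{ij}$. Once this transfer of hypotheses is made rigorous, the remainder of the proof proceeds verbatim along the lines sketched above.
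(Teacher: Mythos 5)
Your strategy is the natural one and follows the structure of the paper's proof of \autoref{Lastmain}, and the role you assign to the extra $\SDC_1$ hypothesis (acyclicity of $\mathcal{Z}^+_\bullet$, needed because \autoref{Main}(xiii) alone only gives $\Kitt=J$) is exactly right. However, note a numbering slip: the $\SD_1$ statement is \autoref{Main}(xii), not (xi), and the $G_s$ package is \autoref{Main}(xiii), not (xii); item (xi) concerns passing to $R/\fb$ with $I\cap\fb=0$ and is not what you want to invoke.

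The point you rightly flag at the end is, in fact, a genuine gap that must be closed, and it cannot be sidestepped. You do need the equality $\Kitt^\fg(s,\ff)_\fn=\R(s,\ff)_\fn$ obtained by applying \autoref{Main}(xii)/(xiii) a second time to $(S_\fn,IS_\fn)$: from $\Kitt^\fg(s,\ff)_\fn\subseteq\R(s,\ff)_\fn$ together with the fact that both specialize mod $(\mathbf{x})$ to $J$, one cannot conclude they coincide (two nested ideals may have the same image modulo a regular sequence $\mathbf{x}$ yet differ, e.g.\ $0\subsetneq(x)\subseteq k[x]$). For the transfer itself: $\SD_1$, $\SDC_1$ and $G_s$ do pass to $(S_\fn,IS_\fn)$ by routine flat base-change arguments, since $R\to S_\fn$ is flat local with regular closed fibre of dimension $rs$, so $\depth$ of each $H_i(\ff;-)$ and $Z_i(\ff;-)$ shifts by $rs$, $\dim$ shifts by $rs$ (\autoref{polynomialExtension}), $\grade(IS_\fn)=\grade I$, and $\mu((IS_\fn)_\fp)=\mu(I_{\fp\cap R})$ with $\htt(\fp\cap R)\le\htt(\fp)$; you should spell this out. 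The truly delicate hypothesis is "weakly $(s-2)$-residually $S_2$": a geometric $i$-residual intersection of $IS_\fn$ is not in general extended from one of $I$, so flat base change alone does not apply, and without either (a) an intrinsic reformulation of residual $S_2$ in terms of depth/Ext of Koszul homology that is visibly stable under flat extension, or (b) a citation that this class is closed under such base change, the $G_s$-package branch of the corollary remains unproved. In the $\SD_1$ branch your argument is complete once the routine transfer is written out.
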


 \vspace{.7cm}
{\bf Acknowledgement.}  The authors would like to thank Vaibhav Pandey for useful discussions 
on the arithmetic rank of residual intersections.

\bibliographystyle{alpha}
\bibliography{References}

\end{document}